
\documentclass{article}
\usepackage{amsmath}
\usepackage{amssymb}
\usepackage{amsthm}

%
%
%
 \newtheorem{thm}{Theorem}[section]
 \newtheorem{cor}[thm]{Corollary}
 \newtheorem{lem}[thm]{Lemma}
 \newtheorem{prop}[thm]{Proposition}
 \theoremstyle{definition}
 \newtheorem{defn}[thm]{Definition}
 \theoremstyle{remark}
 \newtheorem{rem}[thm]{Remark}
 
 \numberwithin{equation}{section}
 
%
%
\renewcommand{\vec}[1]{\mathbf{#1}}

\DeclareMathOperator*{\mspan}{span}
\newcommand{\comp}{\alpha} 

\newcommand{\vecsp}[1]{#1} 

\renewcommand{\div}{\mathrm{div}\,} 
\newcommand{\uinc}{\boldsymbol v} 
\newcommand{\uinco}{\uinc_0} 
\newcommand{\pinc}{q} 
\newcommand{\rinc}{\rho_0} 
\newcommand{\db}[1]{\left\langle#1\right\rangle} 
\newcommand{\setof}[2]{\left\{\left.#1\vphantom{#2} \right| \, #2 \right\}}
\newcommand{\wto}{\rightharpoonup} 
\newcommand{\swto}{\rightharpoondown} 

\begin{document}

%
%
%
%
%
%
%
%

\title{Asymptotic Properties of Linearized Equations of 
Low Compressible Fluid Motion\footnote{%
\textit{Submitted to Journal of Mathematical Fluid Mechanics.}
}}

\author{N.A. Gusev\footnote{%
The author has been supported by
 FTP ``Scientific and scientific--pedagogical personnel of innovative Russia''
 (state contracts P532 and P938) and RFBR Grant 09-01-12157-ofi-m.
}}





\date{December 21, 2010}

\maketitle
\begin{abstract}
Initial--boundary value problem for linearized equations of motion of
viscous barotropic fluid in a bounded domain is considered.
Existence, uniqueness and estimates of weak solutions to 
this problem are derived. Convergence of the solutions
towards the incompressible limit when compressibility
tends to zero is studied.
\end{abstract}

\section{Introduction}
In many cases mathematical treatment of liquids is done
in the framework of \emph{incompressible} fluid. However,
from the physical point of view, all the liquids existing
in nature are \emph{low compressible}. Therefore
it is reasonable to study asymptotic properties of solutions
to equations of low compressible fluid, in particular,
convergence to the corresponding incompressible limit.

Low Mach number limit, which
can be considered as a particular case of low compressibility limit,
was studied by
E. Feireisl, P.-L. Lions and others 
\cite{FeireislNovotnyNS,FeireislBook,FeireislNovotnyOB,LionsMasmoudi}.
In particular, it was
proved (see \cite{FeireislNovotnyNS,LionsMasmoudi})
that there exists a sequence of weak solutions
to equations of compressible fluid motion
which converges to a solution
of the corresponding equations of incompressible fluid.
More precisely, \emph{weak} convergence of the velocity was
established. But from the physical consideration, one
could desire \emph{strong} convergence of the solutions
to yield better approximation of compressible fluid
by incompressible one. Therefore it is interesting to
study sufficient conditions for the convergence to be strong.

Strong convergence of the velocity was established in
\cite[III, \S 8]{Temam}
for the solutions of ``compressible'' system arising
in the method of artificial compressibility. It was also
proved that the gradient of the pressure converges weakly,
but strong convergence of the pressure was not examined.

In some situations the convergence
to the incompressible limit cannot be strong. In \cite{Shifrin}
there was derived a \emph{necessary} condition of
strong convergence of \emph{classical} solutions to
equations of compressible fluid when compressibility tends to zero.
The condition represents a restriction on the initial condition
for the equations of incompressible fluid and this restriction
cannot be satisfied in general case.

In this paper we study \emph{weak} solutions to 
\emph{linearized} equations of compressible fluid.
The reason for this is twofold. First, by the present moment
only existence of weak solutions (on arbitrary time interval)
to the full equations of compressible fluid
is established \cite{FeireislBook},
and there is well-known regularity problem for these solutions.
Second, the proper treatment of nonlinear terms not only would be
hard technically but might perhaps obscure the effect of
low compressibility on the solutions, which is
the main subject of this paper. (Also note that the
linearized equations of compressible fluid are of interest on they own.
For instance, spectral properties of the operator corresponding
to linearized steady equations of compressible fluid were examined in
\cite{Pribyl}.)

Linearized equations of compressible fluid motion were
studied in \cite{IkehataKoboyashiMatsuyama,MuchaZajaczkowski}.
Estimate of strong generalized solution to initial--boundary value
problem for these equations
in a bounded three-dimensional domain
was derived in \cite{MuchaZajaczkowski},
and existence of strong generalized solution to Cauchy
problem in the whole space for them 
was established in \cite{IkehataKoboyashiMatsuyama}.
It appears that existence of weak solutions
to these equations has not been addressed.

In this paper we derive existence, uniqueness and estimates
of weak solutions to the initial--boundary value problem
for linearized equations of compressible fluid.
We examine convergence of the solutions
to the incompressible limit when compressibility
tends to zero. Briefly, we prove that
\begin{itemize}
\item in general case the velocity field converges \emph{weakly};
\item if initial condition for the velocity is solenoidal
then the velocity converges \emph{strongly} and the pressure
converges $*$-\emph{weakly};
\item if, in addition, the initial condition for the pressure
is compatible with the initial value of the pressure
in the incompressible fluid then the convergence of
the pressure is \emph{strong}.
\end{itemize}

\section{Notation and Preliminaries}
\subsection{Common Functional Spaces}
Let $E\subset \mathbb R^n$ be a bounded domain $n\in \mathbb N$.
We will use the following standard spaces:
\begin{itemize}
\item $L^p(E)$ is the Lebesgue space of real-valued functions on $E$,
      $1 \le p \le \infty$;
\item $\widehat{L}^p(E)=\setof{u}{u\in L^p(E), \ \int_E u \, dx = 0}$;
\item $H^s(E)=W^{s,p}(E)$ is the Sobolev space of real-valued
      functions whose weak derivatives up to order $s\in \mathbb N$
      belong to $L^p(E)$;
\item $\mathcal D^\prime(E)$ is the space of distributions on $E$;
      $\mathcal D(E)$ is the space of test functions on $E$;
\item $C_0^\infty(E)$ is the space of smooth real-valued
      functions on $E$ with compact support;
\item $H_0^1(E)$ denotes the closure of $C_0^\infty(E)$ in $H^1(E)$-norm;
\item $H^{-1}(E)$ denotes the dual space of $H_0^1(E)$;
\end{itemize}
(For $E=(a,b)\subset \mathbb R$ we will omit undue brackets,
i.e. $L^2(a,b)=L^2((a,b))$.)

For $\mathbb R^k$-valued functions ($k\in\mathbb N$) we will use Cartesian
products of these spaces, e.g. $L^2(E)^k$.
Since $H^{-1}(D)^k$ is linearly and continuously isomorphic
to the dual space of $H_0^1(D)^k$,
let $H^{-1}(D)^k$ \textit{denote} the latter.

Let $D\subset \mathbb R^d$ be a bounded domain with a partially-smooth
boundary $\partial D$, $d\in \mathbb N$, $d\ge 2$. Let
$Q_T = D\times (0,T)$, where $T>0$.

Vector-valued functions will be denoted by bold letters.
For such functions we will use the following spaces:
\begin{itemize}
\item $\dot{\vecsp J}(D) = 
      \setof{\vec u}{\vec u \in C_0^\infty(D)^d, \ \div \vec u=0}$;
\item $\vecsp J(D)$ is the closure 
      in $\vecsp L^2(D)$-norm of $\dot{\vecsp J}(D)$ (cf. \cite{Lady});\\
      (This space is often also denoted as $H(D)$.)
\item $\vecsp V(D)$ is the closure
      in $H_0^1(D)^d$-norm of $\dot{\vecsp J}(D)$.
\end{itemize}

\subsection{Scalar Products and Duality}\label{ss:dot-prod}

It is well-known that the space $H_0^1(D)^k$ ($k\in \mathbb N$)
is a Hilbert space with respect to the dot product
$$((\vec u, \vec v)) = 
\int_D (\nabla\otimes \vec u) : (\nabla \otimes \vec v) \, dx \equiv
\int_D (\partial_i u_k) (\partial_i v_k) \, dx$$
where $\{\vec u,\vec v\}\subset H_0^1(D)^k$
are vector-valued functions (here and further Einstein summation convention is used).

Let $(\cdot, \cdot)$, depending on the context,
denote the standard dot product in $\mathbb R^k$
or the dot product in $L^2(D)^k$, i.e. $(\vec u, \vec v)=
\int_D u_i v_i \, dx$.

Let $\|\cdot\|_X$ denote the norm of a Banach space $X$ (with dual space $X^*$)
and let $\db{\cdot,\cdot}_X$  denote
the duality brackets for the pair $(X^*,X)$.
Let $(x_n)\subset X$, $x\in X$. It is convenient to use the following notation:
\begin{itemize}
\item $x_n \wto x$ means that the sequence $(x_n)$ converges to $x$ weakly;
\item $x_n \swto x$ means that the sequence $(x_n)$ converges to $x$ $*$-weakly.
\end{itemize}

We will also use the following short-hand notation for norms (see \cite{Temam}):
$$
 \begin{aligned}
 |\cdot| &\equiv \|\cdot\|_{L^2(D)^k}\\
 \|\cdot\| &\equiv \|\cdot\|_{H_0^1(D)^k}
 \end{aligned}
$$
where the value of $k\in \mathbb N$ is defined by the context.

\subsection{Differential Operators}\label{ss:diff}
It is known that the operators $\nabla$, $\Delta$ and $\div$, 
defined on smooth functions with compact support
(i.e. functions from $C_0^\infty(D)^k$, $k\in \mathbb N$),
can be extended by continuity to bounded linear operators
$\nabla\colon L^2(D) \to H^{-1}(D)^d$,
$\Delta\colon H_0^1(D)^d \to H^{-1}(D)^d$ and
$\div\colon L^2(D)^d \to H^{-1}(D)$,
defined by
$$
\begin{aligned}
\nabla\colon & \quad  p\mapsto -(p, \div \cdot),\\
\Delta\colon & \quad  \vec u \mapsto -((\vec u, \cdot)),\\
\div\colon & \quad    \vec u \mapsto -(\vec u, \nabla \cdot).
\end{aligned}
$$

\subsection{Spaces of Banach Space Valued Functions}\label{ss:evolution}

Consider an arbitrary closed interval $[0,T]$ where $T>0$.
Let $X$ be a Banach space and $s\in \mathbb N$.
Let $L^p(0,T;X)$ and $W^{s,p}(0,T;X)$ denote accordingly
Lebesgue--Bochner and Sobolev--Bochner spaces of $X$-valued
functions of real variable $t\in [0,T]$ (see, e.g., \cite{GP}),
$1\le p \le \infty$.
Let $f_t$ denote the weak derivative of a function $f\in W^{1,p}(0,T;X)$
with respect to $t$.

Now we recall the following well-known property of Sobolev space of
Banach space valued functions (see, e.g., \cite{GP}):
\begin{prop}\label{prp:embeddingSC}
Every $u\in W^{1,2}(0,T;X)$
can be redefined on a subset $M\subset[0,T]$ of
zero measure so that $u\in C(0,T;X)$.
\emph{(Here and further $C(0,T; X)$ denotes the space of continuous functions
$f\colon [0,T] \to X$.)}
\end{prop}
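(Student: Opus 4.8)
The plan is to prove the embedding $W^{1,2}(0,T;X)\hookrightarrow C(0,T;X)$ by establishing that any $u\in W^{1,2}(0,T;X)$ agrees a.e.\ with an absolutely continuous representative obtained by integrating its weak derivative. First I would fix $u\in W^{1,2}(0,T;X)$ and let $u_t\in L^2(0,T;X)$ denote its weak time-derivative. Since $[0,T]$ is bounded, Hölder's inequality gives $u_t\in L^1(0,T;X)$, so the Bochner integral $\int_0^t u_t(s)\,ds$ is well-defined for every $t\in[0,T]$. I would then define the candidate representative
\begin{equation*}
\tilde u(t) = x_0 + \int_0^t u_t(s)\,ds,
\end{equation*}
where $x_0\in X$ is a constant to be fixed. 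The key analytic input is that the map $t\mapsto \int_0^t g(s)\,ds$ is continuous on $[0,T]$ for any $g\in L^1(0,T;X)$: this follows from the absolute continuity of the Bochner integral, i.e.\ $\bigl\|\int_t^{t'} g(s)\,ds\bigr\|_X \le \int_t^{t'}\|g(s)\|_X\,ds \to 0$ as $t'\to t$, the right-hand side being the integral of the scalar $L^1$ function $s\mapsto\|g(s)\|_X$. Hence $\tilde u\in C(0,T;X)$ regardless of the choice of $x_0$.

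Next I would show that $\tilde u = u$ almost everywhere for a suitable $x_0$. The function $w := u - \tilde u$ lies in $L^1(0,T;X)$ and, by the fundamental theorem of calculus for Bochner integrals together with the definition of the weak derivative, $w$ has weak derivative $w_t = u_t - u_t = 0$. The crucial lemma is therefore that a function in $L^1(0,T;X)$ with vanishing weak derivative is (a.e.) constant. To prove this I would test against scalar functions: for any $\varphi\in C_0^\infty(0,T)$ with $\int_0^T\varphi\,dt = 1$ and any $\psi\in C_0^\infty(0,T)$, one writes an arbitrary test function as $\eta - \bigl(\int_0^T\eta\,dt\bigr)\varphi$ plus a multiple of $\varphi$, observing that the bracketed function is itself a derivative $\zeta'$ of some $\zeta\in C_0^\infty(0,T)$. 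The condition $w_t=0$ then forces $\int_0^T w(t)\bigl(\eta(t)-(\int\eta)\varphi(t)\bigr)\,dt = 0$, whence $\int_0^T w(t)\eta(t)\,dt = c\int_0^T\eta(t)\,dt$ for the constant vector $c := \int_0^T w(t)\varphi(t)\,dt\in X$; by the fundamental lemma of the calculus of variations in the vector-valued setting, $w = c$ a.e. Choosing $x_0$ so that this constant is absorbed yields $u = \tilde u$ a.e., and redefining $u$ on the null set where they disagree gives the continuous representative.

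The main obstacle I anticipate is the vanishing-derivative lemma, since everything else reduces to the absolute continuity of the Bochner integral, which is standard. The subtlety is that one cannot simply invoke the scalar case componentwise because $X$ need not be finite-dimensional; instead one works with $X$-valued integrals and scalar test functions, using that $\db{f,\int_0^T w\varphi\,dt}_X = \int_0^T \db{f,w}_X\,\varphi\,dt$ for every $f\in X^*$ to transfer the scalar result to $X$ via the Hahn--Banach separation of points. Once this is handled the redefinition on a null set is immediate. Since the proposition is quoted as well known with a reference to \cite{GP}, I expect the author's proof either to cite it outright or to sketch exactly this integrate-the-derivative argument.
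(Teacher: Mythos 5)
The paper itself contains no proof of this proposition: it is explicitly recalled as a well-known property of Sobolev spaces of Banach space valued functions, with a citation to \cite{GP}, so there is no internal argument to compare yours against. Your integrate-the-derivative proof is precisely the standard textbook argument behind the cited result: continuity of $t\mapsto\int_0^t u_t(s)\,ds$ via absolute continuity of the Bochner integral, the fundamental theorem of calculus for Bochner integrals to see that $w=u-\tilde u$ has vanishing weak derivative, and the du Bois-Reymond decomposition $\eta=\bigl(\eta-(\int_0^T\eta\,dt)\varphi\bigr)+(\int_0^T\eta\,dt)\varphi$ to conclude $w$ is a.e.\ constant. The one step you should make explicit is the last one in your Hahn--Banach transfer: applying $f\in X^*$ yields $\langle f, w(t)-c\rangle=0$ only off a null set \emph{depending on} $f$, and $X^*$ is in general uncountable, so you cannot take a naive union. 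The standard repair is that a Bochner-measurable $w$ is a.e.\ separably valued, so a countable family of functionals separating points of the relevant closed separable subspace suffices; alternatively, argue directly at Lebesgue points of $w-c$, where testing against approximate indicators gives $w(t)=c$ pointwise. With that footnote your argument is complete and, as you anticipated, coincides with the proof one finds in the reference the paper points to.
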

\begin{rem}\label{rem:embeddingSC}
Let $\overline{u}$ denote the redefined version of $u$.
Then $\forall\varphi\in C^\infty([0,T])$, $\forall a,b\in[0,T]$
$$
\int_a^b u_t \varphi \,dt = \overline{u}\varphi|_a^b - \int_a^b u\varphi_t \,dt
$$
Moreover, the mapping
$t\mapsto \|\overline{u}(t)\|_H^2$ is absolutely
continuous with
$$
\partial_t \|\overline{u}(t)\|_H^2 = 2(u_t(t), u(t))_H.
$$
for a.e. $t\in [0,T]$.
\end{rem}

Let $X$ be a reflexive Banach space (with dual space $X^*$) and
let $H$ be a Hilbert space for which there exists a linear bounded dense
embedding $\kappa \colon X\to H$. Let $\pi \colon H \to X^*$
be the embedding given by
$\pi\colon h \mapsto (h, \kappa(\cdot))_H$,
where $(\cdot,\cdot)_H$ is the dot product in $H$.
Then embeddings $\pi$ and $\imath = \pi \circ \kappa$
are linear, bounded and dense.
Triple $(X,H,X^*)$ (with embeddings $\kappa,\pi,\imath$)
is said to be an \emph{evolution triple} \cite{GP}.
For given evolution triple let
$$
\widetilde{W}^{1,2}(0,T;X)= 
\setof{f}{f\in L^2(0,T;X), \ \imath(f) \in W^{1,2}(0,T;X^*)}.
$$
This space is referred to as \emph{Sobolev--Lions} space \cite{Sha08}.
It is a reflexive Banach space with norm given by
$$\|f\|_{\widetilde{W}^{1,2}(0,T;X)}=
\|f\|_{L^2(0,T;X)}+\|\imath(f)_t\|_{L^2(0,T;X^*)}.$$
Embedding $\imath$ is often omitted and the space
$\widetilde{W}^{1,2}(0,T;X)$ is then introduced
as the space of functions belonging to $L^2(0,T;X)$
whose weak derivative belongs to $L^2(0,T;X^*)$.
In this paper $\imath$, $\kappa$ and $\pi$
will be omitted when they are not
the subject matter.

The introduced space has the following property 
(see, e.g., \cite{GP}):
\begin{prop}\label{prp:embeddingSLC}
Every $u\in \widetilde{W}^{1,2}(0,T;X)$
can be redefined on a subset $M\subset[0,T]$ of
zero measure so that $u\in C(0,T;H)$.
\end{prop}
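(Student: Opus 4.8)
The plan is to reduce Proposition \ref{prp:embeddingSLC} to the scalar statement of Proposition \ref{prp:embeddingSC} together with an approximation argument, exploiting the density of $X$ in $H$ that comes with the evolution triple. The key difficulty is that an element $u\in\widetilde W^{1,2}(0,T;X)$ lives in $L^2(0,T;X)$ but its weak derivative $\imath(u)_t$ lives only in $L^2(0,T;X^*)$, so one cannot directly invoke the Hilbert-space embedding of Proposition \ref{prp:embeddingSC}; instead one must work with the duality pairing $\db{\cdot,\cdot}_X$ and recover the $H$-norm through the identification $\imath$.

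First I would establish the fundamental integration-by-parts formula: for $u,v\in\widetilde W^{1,2}(0,T;X)$ one has that $t\mapsto (u(t),v(t))_H$ is (after redefinition on a null set) absolutely continuous and
\begin{equation}\label{eq:ibp}
(u(t),v(t))_H - (u(s),v(s))_H = \int_s^t \bigl(\db{\imath(u)_\tau, v(\tau)}_X + \db{\imath(v)_\tau, u(\tau)}_X\bigr)\,d\tau.
\end{equation}
The standard route to \eqref{eq:ibp} is to prove it first for smooth $X$-valued functions, where it is an elementary computation using the product rule and the compatibility $\db{\imath(w),\phi}_X=(w,\phi)_H$ for $w,\phi\in X$, and then to pass to the limit using that smooth functions are dense in $\widetilde W^{1,2}(0,T;X)$ and that both sides of \eqref{eq:ibp} are continuous with respect to the graph norm.

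Taking $v=u$ in \eqref{eq:ibp} then shows that $t\mapsto |u(t)|^2 = (u(t),u(t))_H$ is absolutely continuous, hence the redefined $u$ has an $H$-norm that is continuous in $t$; this is the crucial ingredient, since weak continuity into $H$ combined with continuity of the norm yields strong continuity. So the second step is to prove \emph{weak} continuity $t\mapsto u(t)\in H$: for any $h\in H$ one approximates $h$ by elements of $X$ (using density) and uses \eqref{eq:ibp}-type bounds to show $t\mapsto (u(t),h)_H$ is continuous, and the uniform bound $\sup_t |u(t)|<\infty$ (which follows from \eqref{eq:ibp} applied on subintervals) upgrades this to genuine weak continuity into $H$.

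Finally I would combine the two: if $t_n\to t$ then $u(t_n)\wto u(t)$ weakly in $H$ while $|u(t_n)|\to|u(t)|$, and in a Hilbert space weak convergence together with convergence of norms implies strong convergence, so $u(t_n)\to u(t)$ in $H$. This gives $u\in C(0,T;H)$ after the redefinition on the null set $M$, as claimed. The main obstacle I anticipate is the careful handling of the embeddings $\imath,\kappa,\pi$ in establishing \eqref{eq:ibp} rigorously for smooth functions and controlling the limit passage, since the weak derivative lives in the dual space and one must keep the duality pairing and the $H$-inner product properly aligned through $\pi$; once \eqref{eq:ibp} is in hand the continuity argument is routine.
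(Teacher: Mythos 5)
First, a framing remark: the paper does not prove Proposition \ref{prp:embeddingSLC} at all --- it is quoted as a known property of the Sobolev--Lions space with a reference to \cite{GP} --- so your attempt can only be measured against the standard literature proof, whose overall architecture you have correctly identified. However, as written your argument has one genuine gap, located exactly at the two places you declare routine. The claim that both sides of your integration-by-parts identity are ``continuous with respect to the graph norm'' is circular: the left-hand side involves the pointwise values $(u(t),v(t))_H$, and pointwise evaluation in $H$ is \emph{not} controlled by $\|\cdot\|_{\widetilde W^{1,2}(0,T;X)}$ until one already knows the embedding into $C(0,T;H)$ that is being proved. What the graph norm does control is convergence in $L^2(0,T;H)$, so the limit passage yields the identity only for a.e.\ pair of endpoints; one then knows only that $t\mapsto \|u(t)\|_H^2$ agrees \emph{almost everywhere} with an absolutely continuous function $g$. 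Your final step needs $\|\overline u(t_n)\|_H\to\|\overline u(t)\|_H$ for the weakly continuous representative $\overline u$ at \emph{every} $t$, i.e.\ the identity $\|\overline u(t)\|_H^2=g(t)$ at every point; but weak continuity plus weak lower semicontinuity of the norm gives only the one-sided bound $\|\overline u(t)\|_H^2\le g(t)$ at the exceptional points, which does not exclude a downward jump of the norm there. So ``weak continuity $+$ a.e.\ identification of the norm with a continuous function'' does not by itself upgrade to strong continuity, and the combination step fails as stated.

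The standard repair --- and what the cited proofs (\cite{GP}, and likewise Temam or Lions--Magenes) actually do --- closes both holes simultaneously: apply the smooth-function identity to \emph{differences} of approximants $w=u_n-u_m$ and average over the base point $s$ (or pick $s$ a common Lebesgue point) to get
$$
\sup_{t\in[0,T]}\|u_n(t)-u_m(t)\|_H^2 \;\le\; \frac1T\int_0^T \|u_n-u_m\|_H^2\,ds
\;+\; 2\int_0^T \big\|\imath(u_n-u_m)_t\big\|_{X^*}\,\|u_n-u_m\|_X\,dt,
$$
whose right-hand side \emph{is} controlled by the graph norm (using continuity of $\kappa$ for the first term). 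Hence $(u_n)$ is Cauchy in $C(0,T;H)$, the uniform limit is automatically the continuous representative, the energy identity then holds at every $t$, and your entire second step (weak continuity via density of $X$ in $H$ and Proposition \ref{prp:embeddingSC} applied to $\imath(u)\in W^{1,2}(0,T;X^*)$) becomes unnecessary. Your ingredients --- density of smooth functions in $\widetilde W^{1,2}(0,T;X)$, the product-rule identity for smooth functions, careful bookkeeping of $\imath,\kappa,\pi$ --- are all correct and all needed; what is missing is precisely this uniform-in-$t$ Cauchy estimate, without which neither the pointwise endpoint values in the limit identity nor the final strong-continuity argument are justified.
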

\begin{rem}\label{rem:embeddingSLC}
Let $\overline{u}$ denote the redefined version of $u$.
Then $\forall a,b\in[0,T]$,
$\forall v\in X$ and $\forall\varphi\in C^\infty([0,T])$
$$
\int_a^b \db{u_t, v} \varphi \, dt =
(\overline{u}(t)\varphi(t), v)_H \Big|_{a}^{b}
 - \int_a^b (u, v)_H \varphi_t \, dt.
$$
Consequently, \emph{if 
$\forall v\in X$ and $\forall\varphi\in C_0^\infty([0,T))$
$$
\int_0^T \db{u_t, v} \varphi \, dt =
- (u_0\varphi(0), v)_H
 - \int_0^T (u, v)_H \varphi_t \, dt,
$$
where $u_0\in H$, then $\overline{u}(0)=u_0$.}
(Similar procedure can be used to identify the initial value of
a function from $W^{1,2}(0,T;H)$.)
\end{rem}
\begin{rem}\label{rem:embeddingSLC:1}
The mapping $t\mapsto \|\overline{u}(t)\|_H^2$ is absolutely
continuous with
$$
\partial_t \|\overline{u}(t)\|_H^2 = 2\db{u_t(t), u(t)}
$$
for a.e. $t\in [0,T]$.
\end{rem}

In this paper we consider evolution triples
$(\vecsp H_0^1(D)^k, \vecsp L^2(D)^k, \vecsp H^{-1}(D)^k)$
($k\in \mathbb{N}$) and 
$(\vecsp V(D), \vecsp J(D), \vecsp V(D)^*)$.
For both of them the embeddings $\kappa$, $\pi$ and $\imath$
are given by $\kappa \colon \vec u\mapsto \vec u$ (natural embedding),
$\pi \colon \vec u \mapsto \int_D(\vec u, \cdot) \, dx$
and $\imath=\pi\circ\kappa$.

The following theorem describes the space which is dual to a Lebesgue--Bochner space \cite{GP}:
\begin{prop}\label{prp:dual}
If $X$ is a reflexive Banach space, then
$$L^p(0,T;X)^* = L^{p'}(0,T;X^*),$$
where $1\le p < \infty$, $1/p+1/p'=1$ and
duality is given by $\db{f,g} = \int_0^T fg \,dt$, $f\in L^{p'}(0,T;X^*)$,
$g\in L^p(0,T;X)$.
\end{prop}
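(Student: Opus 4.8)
The plan is to exhibit the canonical isometric isomorphism explicitly. Define $\Phi\colon L^{p'}(0,T;X^*)\to L^p(0,T;X)^*$ by $(\Phi f)(g)=\int_0^T\db{f(t),g(t)}\,dt$. First I would check that $\Phi$ is well defined and an isometry. Well-definedness and the bound $\|\Phi f\|\le\|f\|_{L^{p'}(0,T;X^*)}$ follow from the scalar H\"older inequality applied to $t\mapsto\|f(t)\|_{X^*}$ and $t\mapsto\|g(t)\|_{X}$ together with the pointwise estimate $|\db{f(t),g(t)}|\le\|f(t)\|_{X^*}\|g(t)\|_{X}$. The reverse inequality, giving that $\Phi$ is norm-preserving (and hence injective), is obtained by choosing, for fixed $f$, a function $g$ that nearly realizes the duality pointwise: for each $t$ pick $x(t)\in X$ with $\|x(t)\|_{X}\le 1$ and $\db{f(t),x(t)}$ close to $\|f(t)\|_{X^*}$, render this selection measurable, and insert the normalizing power $\|f(t)\|_{X^*}^{p'-1}$ to build a near-maximizer.

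The substance of the proof is surjectivity of $\Phi$. Here I would use that $X$ reflexive implies $X^*$ reflexive, and that every reflexive Banach space enjoys the Radon--Nikodym property. Given $F\in L^p(0,T;X)^*$, define an $X^*$-valued set function $\mu$ on the Lebesgue measurable subsets of $[0,T]$ by letting $\mu(E)$ be the functional $x\mapsto F(\chi_E\,x)$, where $\chi_E\,x$ denotes the simple function $t\mapsto\chi_E(t)x$. I would verify that $\mu$ is finitely (then countably) additive, of bounded variation, and absolutely continuous with respect to Lebesgue measure; the bounded-variation and absolute-continuity estimates come from testing $F$ against simple functions and applying the isometry bound already established, which also controls the variation by the relevant $L^{p'}$-type quantity.

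Invoking the Radon--Nikodym property of $X^*$ then yields a Bochner-integrable density $f\in L^1(0,T;X^*)$ with $\mu(E)=\int_E f\,dt$ for every measurable $E$. By construction $F(\chi_E\,x)=\db{\mu(E),x}=\int_E\db{f(t),x}\,dt$, so $F$ and $\Phi f$ agree on $X$-valued simple functions; since such functions are dense in $L^p(0,T;X)$, they agree everywhere once $f$ is known to lie in $L^{p'}(0,T;X^*)$. The remaining point is precisely this integrability: I would recover $\|f\|_{L^{p'}(0,T;X^*)}\le\|F\|$ from the variation estimate (for $p=1$ reading off an essential-supremum bound on $\|f(t)\|_{X^*}$, and for $1<p<\infty$ by duality against scalar $L^{p'}$), which simultaneously closes the isometry. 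I expect the main obstacle to be this surjectivity step, specifically verifying the hypotheses needed to apply the Radon--Nikodym property, namely countable additivity, bounded variation, and absolute continuity of $\mu$, and extracting the sharp $L^{p'}$ bound on the density, together with the separability and measurability bookkeeping implicit in working with strongly measurable $X^*$-valued functions.
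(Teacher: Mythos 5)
The paper offers no proof of this proposition at all: it is quoted as a standard result with a citation to \cite{GP}, so there is nothing internal to compare against. Your outline is the classical Riesz-representation argument for Bochner spaces (the one found in the cited literature and in Diestel--Uhl): canonical map $\Phi$, isometry via near-maximizing measurable selections, and surjectivity via the vector measure $\mu(E)\colon x\mapsto F(\chi_E\,x)$ combined with the Radon--Nikodym property of $X^*$ (which holds since $X$ reflexive implies $X^*$ reflexive). The outline is correct, and you rightly identify surjectivity as the substantive step. Two fine points, both of which you flag but which are worth making explicit: first, bounded variation of $\mu$ for $p>1$ does \emph{not} follow from summing the crude bounds $\|\mu(E_i)\|_{X^*}\le\|F\|\,|E_i|^{1/p}$ over a partition (that sum is unbounded under refinement); one must test $F$ against the single simple function $\sum_i \chi_{E_i}\,x_i$ with unit vectors $x_i$, whose $L^p$ norm is $\left(\sum_i|E_i|\right)^{1/p}\le T^{1/p}$, which is exactly the ``testing against simple functions'' you mention. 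Second, the normalizing power $\|f(t)\|_{X^*}^{p'-1}$ is meaningless when $p=1$ (so $p'=\infty$); there one instead supports $g$ on a positive-measure set where $\|f(t)\|_{X^*}$ nearly attains its essential supremum --- you do handle $p=1$ separately at the end, so this is consistent. The measurable-selection bookkeeping (using that strongly measurable functions are essentially separably valued, or approximating $f$ uniformly by countably valued functions) is genuinely needed since $X$ is not assumed separable, but it is routine. In short: a correct sketch of the standard proof of a result the paper itself imports without proof.
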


Different constants which are not dependent on the
principal parameters (such as initial conditions)
will be denoted by the same letter $C$.
The dependence of such constant on some parameter
will be indicated in the subscript.

\subsection{Auxiliary Inequalities}
Let us recall two well-known statements:
\begin{prop}\label{prp:ineq-convex}
Let $a\ge 0$, $b\ge 0$ and $J$ be real numbers. If
$J^2 \le a+ b J$ then $J \le b + \sqrt{a}$.
\end{prop}
\begin{prop}[Gronwall's inequality]\label{prp:Gronwall}
Let $I$ be an absolutely continuous nonnegative function of a variable $t\in[0,T]$
and let $\varphi, \psi \in L^1(0,T)$ be nonnegative functions.
If the derivative $I'(t)$ satisfies
$$
I'(t) \le \varphi(t) I(t) + \psi(t) \quad \text{a.e. on } [0,T],
$$
then for a.e. $t\in [0,T]$
$$
I(t) \le e^{\int_0^t \varphi(\tau) \, d\tau}\left(
I(0) + \int_0^t \psi(\tau) \, d\tau \right).
$$
\end{prop}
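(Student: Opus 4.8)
The plan is to use the classical integrating-factor argument. First I would introduce the primitive $\Phi(t) = \int_0^t \varphi(\tau)\,d\tau$; since $\varphi \in L^1(0,T)$ is nonnegative, $\Phi$ is nondecreasing, nonnegative, absolutely continuous, and satisfies $\Phi' = \varphi$ almost everywhere. The key device is the auxiliary function $J(t) = e^{-\Phi(t)} I(t)$, which is absolutely continuous on $[0,T]$ as a product of the bounded absolutely continuous function $e^{-\Phi}$ and the absolutely continuous function $I$. Note also that $J(0) = I(0)$ because $\Phi(0) = 0$.

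Next I would differentiate $J$. At almost every $t$ both $I$ and $\Phi$ are differentiable, so the product rule gives $J'(t) = e^{-\Phi(t)}\bigl(I'(t) - \varphi(t) I(t)\bigr)$, and the hypothesis $I'(t) \le \varphi(t) I(t) + \psi(t)$ together with $e^{-\Phi(t)} > 0$ yields $J'(t) \le e^{-\Phi(t)} \psi(t)$ almost everywhere. Integrating this inequality from $0$ to $t$ — which is legitimate precisely because $J$ is absolutely continuous, so the fundamental theorem of calculus applies — gives $e^{-\Phi(t)} I(t) - I(0) \le \int_0^t e^{-\Phi(\tau)} \psi(\tau)\,d\tau$.

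Finally I would multiply through by $e^{\Phi(t)}$ and estimate the resulting integral. Since $\Phi$ is nonnegative, for $0 \le \tau \le t$ we have $e^{\Phi(t) - \Phi(\tau)} \le e^{\Phi(t)}$; because $\psi \ge 0$ this lets me bound the integral term by $e^{\Phi(t)} \int_0^t \psi(\tau)\,d\tau$. Collecting terms produces exactly $I(t) \le e^{\Phi(t)}\bigl(I(0) + \int_0^t \psi(\tau)\,d\tau\bigr)$, which is the claimed estimate (valid in fact for every $t$ by continuity of both sides, hence a fortiori almost everywhere).

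I expect the only real subtlety to be the regularity bookkeeping: verifying that $J = e^{-\Phi} I$ is genuinely absolutely continuous — so that integrating the pointwise almost-everywhere differential inequality is valid — rather than merely differentiable almost everywhere. This rests on the fact that products of absolutely continuous functions on a compact interval are absolutely continuous and that composing the $C^1$ map $s \mapsto e^{-s}$ with the absolutely continuous $\Phi$ preserves absolute continuity. Everything else, including the two sign conditions $\varphi \ge 0$ and $\psi \ge 0$ used in the last step, is routine.
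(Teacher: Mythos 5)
Your proof is correct: the integrating-factor device $J(t)=e^{-\Phi(t)}I(t)$, the care taken to verify that $J$ is absolutely continuous so that the fundamental theorem of calculus legitimizes integrating the a.e.\ differential inequality, and the final use of $\varphi\ge 0$, $\psi\ge 0$ to bound $e^{\Phi(t)-\Phi(\tau)}\le e^{\Phi(t)}$ are all sound, and in fact yield the estimate for every $t\in[0,T]$. The paper itself does not prove Proposition~\ref{prp:Gronwall} but delegates it to the cited reference \cite{Evans}, where the argument is this same classical integrating-factor proof (differing only in the trivial detail that there one bounds $e^{-\Phi(\tau)}\le 1$ before integrating rather than $e^{\Phi(t)-\Phi(\tau)}\le e^{\Phi(t)}$ after), so your approach coincides with the intended one.
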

The proof of Proposition \ref{prp:ineq-convex} is elementary and
the proof of Proposition \ref{prp:Gronwall} can be found e.g. in \cite{Evans}.

We will use the following mix of 
Lemmas \ref{prp:ineq-convex} and \ref{prp:Gronwall}:

\begin{lem}\label{lem:mixed}
Let $I$ and $J$ be absolutely continuous nonnegative functions of a variable $t\in[0,T]$,
$J\in L^2(0,T)$. Let $a, c \in L^1(0,T)$ and $b\in L^2(0,T)$
be nonnegative functions. If for a.e. $t\in[0,T]$
\begin{equation}\label{eq:mixed}
I'(t) + J^2(t) \le a(t) I(t) + b(t) J(t) + c(t)
\end{equation}
then
\begin{gather*}
\|J\|_{L^2(0,T)} \le C_a \left(
 \sqrt{I(0)} + \sqrt{\|c\|_{L^1(0,T)}} + \|b\|_{L^2(0,T)}
 \right), \\
 \|I\|_{L^\infty(0,T)} \le C_a \left(
 I(0) + \|c\|_{L^1(0,T)} + \|b\|^2_{L^2(0,T)}
 \right),
\end{gather*}
where constant $C_a$ depends only on $A=\|a\|_{L^1(0,T)}$
\end{lem}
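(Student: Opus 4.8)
The plan is to decouple the bilinear cross term $b(t)J(t)$ by Young's inequality, reduce the resulting differential inequality to a pure Gronwall inequality for $I$, and then recover the $L^2$-bound on $J$ by integrating in time. The coupling through $b(t)J(t)$ is the only genuine obstacle: it mixes the two unknown functions, so neither Proposition \ref{prp:Gronwall} nor Proposition \ref{prp:ineq-convex} applies directly to \eqref{eq:mixed}.

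First I would apply Young's inequality in the form $b(t)J(t) \le \tfrac12 J^2(t) + \tfrac12 b^2(t)$ and absorb $\tfrac12 J^2(t)$ into the left-hand side of \eqref{eq:mixed}, obtaining for a.e. $t$
$$
I'(t) + \tfrac12 J^2(t) \le a(t) I(t) + \tfrac12 b^2(t) + c(t).
$$
Since $b\in L^2(0,T)$, the function $\tfrac12 b^2 + c$ lies in $L^1(0,T)$. Dropping the nonnegative term $\tfrac12 J^2(t)$ leaves $I'(t) \le a(t)I(t) + \bigl(\tfrac12 b^2(t) + c(t)\bigr)$, to which Proposition \ref{prp:Gronwall} applies with $\varphi = a$ and $\psi = \tfrac12 b^2 + c$. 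This yields
$$
\|I\|_{L^\infty(0,T)} \le e^{A}\Bigl(I(0) + \tfrac12\|b\|_{L^2(0,T)}^2 + \|c\|_{L^1(0,T)}\Bigr),
$$
which is the second asserted estimate after renaming the $A$-dependent constant $C_a$.

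For the bound on $J$, I would integrate the absorbed inequality over $[0,T]$. Using $I(T)\ge 0$ and the absolute continuity of $I$ gives
$$
\tfrac12 \int_0^T J^2 \, dt \le I(0) + \int_0^T a(t) I(t) \, dt + \tfrac12\|b\|_{L^2(0,T)}^2 + \|c\|_{L^1(0,T)}.
$$
Estimating $\int_0^T a I \, dt \le A\,\|I\|_{L^\infty(0,T)}$ and inserting the $L^\infty$-bound on $I$ already obtained produces $\|J\|_{L^2(0,T)}^2 \le \widetilde C_a\bigl(I(0) + \|c\|_{L^1(0,T)} + \|b\|_{L^2(0,T)}^2\bigr)$ with $\widetilde C_a$ depending only on $A$. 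Taking square roots and using the elementary subadditivity $\sqrt{x+y+z}\le \sqrt x + \sqrt y + \sqrt z$ for $x,y,z\ge 0$ on the right-hand side gives the first asserted estimate.

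A remark on the route suggested by the name of the lemma: instead of Young's inequality one may integrate \eqref{eq:mixed} first, bound $\int_0^t bJ$ by Cauchy--Schwarz as $\|b\|_{L^2(0,T)}\,\|J\|_{L^2(0,t)}$, and then read the resulting inequality for $J_*(t) = \|J\|_{L^2(0,t)}$ in the form $J_*^2 \le \alpha + \beta J_*$ so as to invoke Proposition \ref{prp:ineq-convex} directly; this is essentially the same completion of the square packaged differently. Either way the decoupling of the $bJ$ term is the crux, and everything after it is bookkeeping of constants depending on $A$.
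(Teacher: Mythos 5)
Your proof is correct, but it runs the paper's argument in the opposite order and with a different decoupling of the cross term. The paper keeps $b(t)J(t)$ intact: it first drops $J^2$, applies Proposition \ref{prp:Gronwall} with $\psi = bJ+c$ and the Cauchy--Bunyakovsky inequality to get $\|I\|_{L^\infty(0,T)} \le e^{A}\bigl(I(0) + \|b\|_{L^2(0,T)}\|J\|_{L^2(0,T)} + \|c\|_{L^1(0,T)}\bigr)$ with the unknown $\|J\|_{L^2(0,T)}$ still on the right-hand side; it then integrates \eqref{eq:mixed}, substitutes this bound for $\int_0^T aI\,dt$, and arrives at the quadratic inequality $\|J\|_{L^2(0,T)}^2 \le C_a\bigl(\|b\|_{L^2(0,T)}\|J\|_{L^2(0,T)} + \|c\|_{L^1(0,T)} + I(0)\bigr)$, which it resolves via Proposition \ref{prp:ineq-convex}; only at the very end does Young's inequality appear, to convert the $J$-bound back into the final $I$-bound. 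You instead apply Young's inequality pointwise at the outset, absorbing $\tfrac12 J^2$ into the left-hand side, which decouples the inequality immediately: your $I$-estimate follows from a pure Gronwall argument with $\psi = \tfrac12 b^2 + c$, and your $J$-estimate from a single integration plus the elementary subadditivity of the square root, with no need for Proposition \ref{prp:ineq-convex} and no back-substitution loop. Amusingly, the alternative route you sketch in your closing remark --- integrate first, use Cauchy--Schwarz, then invoke Proposition \ref{prp:ineq-convex} on $J_*^2 \le \alpha + \beta J_*$ --- is essentially the paper's actual proof (modulo the interleaved Gronwall step needed for $\int_0^T aI\,dt$), so your instinct that the two packagings are equivalent is right. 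Your version is shorter and structurally cleaner, since the two estimates come out sequentially rather than entangled; the paper's version is what motivates its own framing of the lemma as a ``mix'' of Propositions \ref{prp:ineq-convex} and \ref{prp:Gronwall}. Both yield constants depending only on $A = \|a\|_{L^1(0,T)}$, and taking the larger of your two constants gives the single $C_a$ demanded by the statement.
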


\begin{proof}
From \eqref{eq:mixed} for a.e. $t\in [0,T]$
\begin{equation*}
I'(t) \le a(t) I(t) + b(t) J(t) + c(t)
\end{equation*}
and then, by Proposition \ref{prp:Gronwall} and Cauchy--Bunyakovsky inequality,
\begin{multline*}
I(t) \le \exp\left(\int_0^t a \, d\tau \right)
\left( I(0) + \int_0^t (bJ+c) d\tau \right)
\le\\ \le
e^{\|a\|_{L^1(0,t)}}
\left(
I(0) + \|b\|_{L^2(0,t)} \|J\|_{L^2(0,t)} + \|c\|_{L^1(0,t)}
\right)
\end{multline*}
and hence
\begin{equation*}
\|I\|_{L^\infty(0,T)} \le 
e^{\|a\|_{L^1(0,T)}}
\left(
\|b\|_{L^2(0,T)} \|J\|_{L^2(0,T)} + \|c\|_{L^1(0,T)} + I(0)
\right).
\end{equation*}
Integrating the inequality $J^2\le aI+bJ+c - I'$ and noting that $I(T)\ge 0$ we obtain
\begin{multline}\label{eq:J2ineq}
\int_0^T J^2 dt \le \int_0^T aI \, d\tau + \int_0^T bJ \, d\tau + \int_0^T c\,
d\tau + I(0) \le \\
\le \|I\|_{L^\infty(0,T)} \|a\|_{L^1(0,T)} + 
\|b\|_{L^2(0,T)}\|J\|_{L^2(0,T)} + \|c\|_{L^1(0,T)} + I(0) \le \\
\le C_a \left( \|b\|_{L^2(0,T)}\|J\|_{L^2(0,T)} + \|c\|_{L^1(0,T)} + I(0)\right)
\end{multline}
where
$$
C_a = 1 + \|a\|_{L^1(0,T)} e^{\|a\|_{L^1(0,T)}}.
$$
Applying Proposition \ref{prp:ineq-convex} to \eqref{eq:J2ineq} we get
$$
\|J\|_{L^2(0,T)} \le C_a \left(\sqrt{\|c\|_{L^1(0,T)}} + \sqrt{I(0)} +
 \|b\|_{L^2(0,T)}\right)
$$
Finally, by Young's inequality
\begin{multline*}
\|I\|_{L^\infty(0,T)} \le e^{\|a\|_{L^1(0,T)}}
\left(
\frac{\|b\|_{L^2(0,T)}^2 + \|J\|_{L^2(0,T)}^2}{2} + \|c\|_{L^1(0,T)} + I(0)
\right)\le \\
\le e^{\|a\|_{L^1(0,T)}}
\left(
\|b\|_{L^2(0,T)}^2 + \|c\|_{L^1(0,T)} + I(0)+{}\right.\\
\left.{}+ \frac32 C_a^2 \left(\|b\|_{L^2(0,T)}^2 + \|c\|_{L^1(0,T)} + I(0)\right)
\right) ={} \\
{}= \tilde C_a \left(\|b\|_{L^2(0,T)}^2 + \|c\|_{L^1(0,T)} + I(0)\right),
\end{multline*}
where
\begin{equation*}
\tilde C_a = e^{\|a\|_{L^1(0,T)}}\left(1 + \frac32 C_a^2\right). \qedhere
\end{equation*}
\end{proof}

\subsection{Properties of Special Subspaces}
Let $G(D)$ denote the orthogonal complement of $J(D)$ in $L^2(D)^d$.
Let $P_J$ and $P_G$ denote the orthogonal projectors of $L^2(D)^d$
onto $J(D)$ and $G(D)$ respectively. Projector $P_J$
is referred to as \emph{Leray projector}.

Let 
$$V(D)^\perp = \setof{f}{f\in H^{-1}(D)^d,
\ f(\vec u)=0, \ \forall \vec u \in V(D)}.$$
It is clear that
functionals represented by $\nabla p$, $p\in L^2(D)$,
belong to $V(D)^\perp$. To show that any functional from
the latter space can be represented is such form,
the following statements are needed
(see \cite{AS} and \cite[I, \S 1]{Temam}): 

\begin{prop}\label{prp:grad-repr}
A functional $f\in H^{-1}(D)^d$ is representable in the form
$$f = \nabla p$$ for some $p\in L^2(D)$ if and only if
$$\db{f,\vec u} = 0 \quad \forall \vec u \in \dot{J}(D).$$
\end{prop}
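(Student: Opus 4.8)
The ``only if'' part is immediate. If $f = \nabla p$ with $p \in L^2(D)$, then by the definition of $\nabla$ recalled in Subsection \ref{ss:diff}, for every $\vec u \in \dot{J}(D)$ we have $\db{f, \vec u} = -(p, \div \vec u) = 0$, since $\div \vec u = 0$ on $\dot J(D)$. So the whole substance lies in the converse, for which I would argue as follows.

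Suppose $f \in H^{-1}(D)^d$ satisfies $\db{f,\vec u} = 0$ for all $\vec u \in \dot J(D)$. The plan is to realize the range of $\nabla$ as a pre-annihilator via the closed range theorem, the crucial analytic input being the Ne\v{c}as inequality: there is a constant $C > 0$ with $\|p\|_{L^2(D)} \le C\,\|\nabla p\|_{H^{-1}(D)^d}$ for every $p \in \widehat{L}^2(D)$. Since $\nabla$ annihilates constants, this estimate shows that $\nabla$, regarded as a map $\widehat{L}^2(D) \to H^{-1}(D)^d$, is bounded below, hence injective with closed range; consequently the range of $\nabla$ on all of $L^2(D)$ coincides with this closed subspace.

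With the range closed, the closed range theorem identifies it with the pre-annihilator of the kernel of the transpose. A direct computation from the definition in Subsection \ref{ss:diff} shows that the transpose of $\nabla \colon L^2(D) \to H^{-1}(D)^d$ is $-\div \colon H_0^1(D)^d \to L^2(D)$ (using $(H^{-1}(D)^d)^* = H_0^1(D)^d$), whose kernel is $\{\vec u \in H_0^1(D)^d : \div \vec u = 0\}$. Thus $f$ lies in the range of $\nabla$ exactly when $\db{f, \vec u} = 0$ for every divergence-free $\vec u \in H_0^1(D)^d$. Since $f$ is continuous and vanishes on $\dot J(D)$, it vanishes on the $H_0^1$-closure $V(D)$ of $\dot J(D)$; it then remains to pass from $V(D)$ to the full kernel, i.e. to invoke the density relation $V(D) = \{\vec u \in H_0^1(D)^d : \div \vec u = 0\}$, after which $f \in \mathrm{range}(\nabla)$, giving the desired $p$.

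The main obstacle is precisely this combination of the Ne\v{c}as inequality (which secures the closed range and is where the regularity of $\partial D$ enters) together with the identification of the kernel of the transpose with $V(D)$. An alternative route that avoids the explicit density statement is to apply De Rham's theorem directly to $f$ viewed as a vector distribution: vanishing on $\dot J(D)$ yields $f = \nabla p$ for some $p \in \mathcal D^\prime(D)$, and the regularity implication $\nabla p \in H^{-1}(D)^d \Rightarrow p \in L^2(D)$ (again the Ne\v{c}as lemma) then upgrades $p$ to the required class. Either way the proof rests on the two external results cited as \cite{AS} and \cite[I, \S 1]{Temam}.
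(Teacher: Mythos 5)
The paper itself gives no proof of this proposition: it is imported verbatim from the cited sources \cite{AS} and \cite[I, \S 1]{Temam}, so there is no internal argument to match against, and your task effectively was to reconstruct the standard proof. You do so correctly. Your ``alternative route'' --- de Rham's theorem applied to $f$ as a vector distribution vanishing on $\dot{J}(D)$, yielding $f=\nabla p$ with $p\in\mathcal D'(D)$, followed by the Ne\v{c}as-type regularity implication $\nabla p\in H^{-1}(D)^d\Rightarrow p\in L^2(D)$ --- is precisely the proof in Temam's Chapter I, \S 1 (Propositions 1.1--1.2), i.e.\ the very argument the paper is pointing to; note also that the Ne\v{c}as estimate you invoke is available inside the paper as Proposition \ref{prp:grad-est}, so that route is fully consistent with the paper's toolkit.

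Your primary route via the closed range theorem is essentially sound --- the lower bound on $\widehat{L}^2(D)$ does give injectivity and closed range, the range over $L^2(D)$ does coincide with that over $\widehat{L}^2(D)$ since $\nabla$ kills constants, and the transpose computation $\db{\nabla p,\vec u}=-(p,\div\vec u)$ correctly identifies $\ker$ of the transpose with the divergence-free fields in $H_0^1(D)^d$. But the step you flag as remaining, namely $V(D)=\setof{\vec u\in H_0^1(D)^d}{\div\vec u=0}$, deserves a stronger warning than you give it: this is not a routine density fact. It depends on the regularity of $\partial D$ (it can fail for domains with cusps), and in Temam's book it is \emph{deduced from} the gradient characterization you are trying to prove, so citing it here risks circularity unless you point to an independent proof (for instance one built on the Bogovskii operator, Proposition \ref{prp:bogovskii}, which requires its own hypotheses on $D$). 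Since your de Rham route is self-contained modulo Ne\v{c}as and avoids the density statement altogether, the proposal as a whole stands, but the closed-range route should be presented as conditional on an independently established density theorem rather than as an equal alternative.
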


\begin{prop}\label{prp:grad-est}
If $p\in \widehat{L}^2(D)$ then
$$\|p\|_{\widehat{L}^2(D)} \le C \|\nabla p\|_{H^{-1}(D)^d},$$
where the constant $C$ depends only on the domain $D$.
\end{prop}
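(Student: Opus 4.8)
The plan is to prove the estimate by duality, using the fact that the divergence operator admits a bounded right inverse on the space of functions with vanishing mean. First I would invoke the Bogovski\u\i-type result: for every $g\in\widehat L^2(D)$ there exists $\vec u\in H_0^1(D)^d$ with $\div\vec u=g$ and $\|\vec u\|\le C\|g\|_{L^2(D)}$, where $C=C(D)$. This is precisely the surjectivity (with a bounded section) of $\div\colon H_0^1(D)^d\to\widehat L^2(D)$, and it is the genuinely hard analytic input. It can be quoted from \cite[I, \S 1]{Temam} or \cite{AS}, or established directly via Bogovski\u\i's explicit integral operator on star-shaped subdomains and a partition of the boundary. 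Note that this surjectivity is, via the closed-range theorem, essentially the dual formulation of the coercivity we are after, so the two facts are close relatives.

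Granting the right inverse, the estimate follows in a few lines. Given $p\in\widehat L^2(D)$ with $p\neq0$ (the case $p=0$ being trivial), I would choose $\vec u\in H_0^1(D)^d$ with $\div\vec u=p$ and $\|\vec u\|\le C\|p\|_{\widehat L^2(D)}$. By the definition of the extended gradient in Section \ref{ss:diff} one has $\db{\nabla p,\vec u}=-(p,\div\vec u)$, so that
$$\|p\|_{\widehat L^2(D)}^2=(p,p)=(p,\div\vec u)=-\db{\nabla p,\vec u}\le\|\nabla p\|_{H^{-1}(D)^d}\,\|\vec u\|\le C\,\|\nabla p\|_{H^{-1}(D)^d}\,\|p\|_{\widehat L^2(D)}.$$
Dividing by $\|p\|_{\widehat L^2(D)}$ yields the claim, with the same constant $C=C(D)$ coming from the right inverse of $\div$. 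Thus the whole content of the proposition is concentrated in the existence of a bounded section of the divergence; the remaining duality argument is mere bookkeeping.

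If one prefers to avoid quoting Bogovski\u\i, I would use a compactness argument based on the Ne\v{c}as inequality $\|p\|_{L^2(D)}\le C(\|p\|_{H^{-1}(D)}+\|\nabla p\|_{H^{-1}(D)^d})$. Suppose the conclusion fails; then there are $p_n\in\widehat L^2(D)$ with $\|p_n\|_{\widehat L^2(D)}=1$ and $\|\nabla p_n\|_{H^{-1}(D)^d}\to0$. Since the embedding $L^2(D)\hookrightarrow H^{-1}(D)$ is compact (being dual to the Rellich embedding $H_0^1(D)\hookrightarrow L^2(D)$), one may pass to a subsequence converging in $H^{-1}(D)$; the Ne\v{c}as inequality then shows that $(p_n)$ is Cauchy in $L^2(D)$, with a limit $p$ satisfying $\|p\|_{\widehat L^2(D)}=1$, zero mean, and $\nabla p=0$. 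The last condition forces $p$ to be constant, and the zero-mean condition then forces $p=0$, contradicting $\|p\|_{\widehat L^2(D)}=1$. Either way the crux is the same closed-range property of $\nabla$ on zero-mean functions, and I expect that property — not the surrounding estimates — to be the only real obstacle.
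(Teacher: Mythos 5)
Your proposal is correct, but note that the paper does not prove this proposition at all: it is quoted as a known fact from \cite{AS} and \cite[I, \S 1]{Temam}, so there is no internal proof to compare against --- you have supplied what the paper outsources. Both of your arguments are sound. The first (Bogovski\u{\i} right inverse of $\div$ plus duality) is complete modulo the quoted section of the divergence, and the bookkeeping is done carefully: with the paper's convention $\nabla p = -(p,\div\cdot)$ you correctly get $(p,\div\vec u) = -\db{\nabla p,\vec u} \le \|\nabla p\|_{H^{-1}(D)^d}\|\vec u\|$, and dividing by $\|p\|_{\widehat{L}^2(D)}$ closes the estimate. One caveat: the paper's own Proposition \ref{prp:bogovskii} is stated only for domains star-shaped with respect to a ball, while Proposition \ref{prp:grad-est} is invoked for the general bounded domain $D$ with partially-smooth boundary; so if you want to lean on the paper's toolkit rather than the literature, you must add the standard extension of Bogovski\u{\i}'s operator to Lipschitz domains by decomposition into star-shaped pieces, exactly as you indicate. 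Your second argument (Ne\v{c}as inequality, compactness of $L^2(D)\hookrightarrow H^{-1}(D)$ via Schauder duality with Rellich, and a contradiction \`a la Peetre--Tartar) is essentially the route taken in the sources the paper cites; it trades the explicit integral operator for the Ne\v{c}as inequality, which is of comparable depth and likewise needs boundary regularity, and it yields a non-constructive constant where the Bogovski\u{\i} route gives one traceable to the right inverse. Two small points worth making explicit: the step ``$\nabla p=0$ forces $p$ constant'' uses connectedness of $D$ (harmless, since a domain is connected by convention), and your observation that the bounded section of $\div$ is, via the closed-range theorem, dual to the coercivity being proved correctly explains why neither route can avoid a genuinely hard analytic input.
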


Propositions \ref{prp:grad-repr} and \ref{prp:grad-est} 
allow us to introduce an operator, which is inverse to $\nabla$:
\begin{lem}\label{lem:nabla-inv}
The operator $\nabla\colon \widehat{L}^2(D) \to V(D)^\perp$
has a bounded inverse $\nabla^{-1}$.
\end{lem}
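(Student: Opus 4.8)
The plan is to show that the linear map $\nabla\colon \widehat{L}^2(D)\to V(D)^\perp$ is a bounded bijection and then invoke the bounded inverse theorem (or directly exhibit a bounded inverse). First I would verify that $\nabla$ indeed maps $\widehat{L}^2(D)$ into $V(D)^\perp$: this is the easy inclusion already observed in the text, since for $p\in L^2(D)$ the functional $\nabla p = -(p,\div\,\cdot)$ annihilates every $\vec u\in \dot{J}(D)$ (because $\div\vec u = 0$ there), hence by density it annihilates all of $V(D)$, so $\nabla p\in V(D)^\perp$. Thus $\nabla$ is a well-defined bounded linear operator between the two Banach spaces, boundedness coming from the continuity of $\nabla\colon L^2(D)\to H^{-1}(D)^d$ established in Section~\ref{ss:diff}.

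Next I would establish that $\nabla$ is \emph{surjective} onto $V(D)^\perp$. Take any $f\in V(D)^\perp$. By definition $\db{f,\vec u}=0$ for all $\vec u\in V(D)$, and since $\dot{J}(D)\subset V(D)$ this gives $\db{f,\vec u}=0$ for all $\vec u\in\dot{J}(D)$. Proposition~\ref{prp:grad-repr} then yields some $p\in L^2(D)$ with $f=\nabla p$. To land in $\widehat{L}^2(D)$ I would replace $p$ by $p-\frac{1}{|D|}\int_D p\,dx$; since $\nabla$ annihilates constants (a constant $c$ gives $\nabla c = -(c,\div\,\cdot)$, which vanishes on $H_0^1(D)^d$ by the divergence theorem / integration by parts with zero boundary trace), this shifted function still satisfies $f=\nabla p$ and now has zero mean, so $p\in\widehat{L}^2(D)$.

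For \emph{injectivity}, suppose $p\in\widehat{L}^2(D)$ with $\nabla p=0$ in $V(D)^\perp\subset H^{-1}(D)^d$. Proposition~\ref{prp:grad-est} gives $\|p\|_{\widehat{L}^2(D)}\le C\|\nabla p\|_{H^{-1}(D)^d}=0$, so $p=0$; this simultaneously proves injectivity and, more strongly, the lower bound $\|p\|_{\widehat{L}^2(D)}\le C\|\nabla p\|_{H^{-1}(D)^d}$ for all $p$. Since $\nabla$ is a bounded bijection, the inverse $\nabla^{-1}$ exists as a linear map; the estimate from Proposition~\ref{prp:grad-est} shows directly that $\|\nabla^{-1}f\|_{\widehat{L}^2(D)}\le C\|f\|_{H^{-1}(D)^d}$ for $f\in V(D)^\perp$, so $\nabla^{-1}$ is bounded without even appealing to the open mapping theorem.

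The only genuinely delicate point is the surjectivity argument, specifically the transition from ``$f$ annihilates $V(D)$'' to ``$f$ annihilates $\dot{J}(D)$'' and then to the representation $f=\nabla p$. This is really just the inclusion $\dot{J}(D)\subset V(D)$ combined with Proposition~\ref{prp:grad-repr}, so the hard analytic content has been packaged into that proposition (and into Proposition~\ref{prp:grad-est} for the estimate); the remaining work here is essentially bookkeeping. I would therefore expect the proof to be short, with the main care needed in handling the mean-zero normalization so that the image and domain spaces match exactly.
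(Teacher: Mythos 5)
Your proof is correct and follows essentially the same route as the paper: both arguments obtain $p$ from Proposition~\ref{prp:grad-repr} (via $\dot{J}(D)\subset V(D)$), normalize it to zero mean, use Proposition~\ref{prp:grad-est} for injectivity/well-definedness of the inverse, and read off boundedness of $\nabla^{-1}$ directly from that same estimate without any appeal to the open mapping theorem. Your version is merely a bit more explicit (e.g.\ checking that $\nabla$ annihilates constants and writing the mean normalization as $p-\frac{1}{|D|}\int_D p\,dx$), but there is no substantive difference.
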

\begin{proof}
For $f\in V(D)^\perp$ let $p_1=p-\int_D p \, dx$, where
$p$ is given by Proposition \ref{prp:grad-repr}. If there exists
$p_2 \in \widehat{L}^2(D)$ such that $f=\nabla p_2$ then
$\nabla(p_1-p_2)=0$ and from Proposition \ref{prp:grad-est}
$p_1-p_2=0$, therefore we can introduce the operator
$\nabla^{-1}\colon f \mapsto p_1$ (which is clearly linear).

It follows from Proposition \ref{prp:grad-est} that
$\|p_1\|_{\widehat{L}^2(D)} \le C \|f\|_{H^{-1}(D)}$
and therefore $\nabla^{-1}$ is bounded.
\end{proof}

Later we will use the operator which was first
constructed by M.E. Bogovskii \cite{Bogovskii}:

\begin{prop}\label{prp:bogovskii}
Suppose that the domain $D$ is star-shaped with respect to some ball.
Then there exists a bounded linear operator 
$\mathcal B\colon \widehat{L}^2(D)\to H_0^1(D)^d$
such that $\forall f\in \widehat{L}^2(D)$
the field $\vec u = \mathcal B(f)$ satisfies
$$
\div \vec u = f \quad \text{(a.e. in $D$).}
$$
\end{prop}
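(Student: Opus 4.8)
The plan is to realise $\mathcal B$ through Bogovskii's explicit integral formula on a dense space of smooth data and then extend it by continuity. After a translation I may assume the ball $B$ with respect to which $D$ is star-shaped is centred at the origin; fix once and for all a function $\omega\in C_0^\infty(B)$ with $\int_B \omega\,dx = 1$. The space $\mathcal A=\setof{g}{g\in C_0^\infty(D),\ \int_D g\,dx = 0}$ is dense in $\widehat L^2(D)$, and for $f\in\mathcal A$ I set $\mathcal B(f)=\vec u$ with
\[
\vec u(x)=\int_D f(y)\,(x-y)\left(\int_1^\infty \omega\bigl(y+s(x-y)\bigr)\,s^{d-1}\,ds\right)dy.
\]
Since $\omega$ has compact support the inner integral is finite, and the star-shapedness of $D$ with respect to $B$ guarantees that whenever the integrand is nonzero the point $x$ lies in a compact subset of $D$ (for $s\ge 1$ the argument $w=y+s(x-y)\in B$ forces $x=\tfrac1s w+\tfrac{s-1}{s}y$ to be a convex combination of $w\in B$ and $y\in\operatorname{supp}f\subset D$); hence $\vec u\in C_0^\infty(D)^d$.

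The next step is the pointwise identity $\div\vec u=f$. Differentiating under the integral sign and using the substitution $w=y+s(x-y)$ together with the homogeneity of the kernel, a direct computation yields
\[
\div\vec u(x)=f(x)-\omega(x)\int_D f(y)\,dy,
\]
and since $f\in\widehat L^2(D)$ has zero mean the second term vanishes, giving $\div\vec u=f$ a.e.\ in $D$.

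The heart of the matter is the estimate $\|\vec u\|\le C|f|$, i.e.\ boundedness from $L^2(D)$ into $H_0^1(D)^d$. The $L^2$-bound for $\vec u$ itself is easy: the kernel is dominated by $C|x-y|^{-(d-1)}$, which is locally integrable, so $|\vec u|\le C|f|$ follows from Young's (or Schur's) inequality. For the gradient I would differentiate the kernel in $x$ and split the result into a principal-value part, whose kernel is homogeneous of degree $-d$ in $x-y$ and has vanishing mean over spheres, and a remainder that is only weakly singular of order $-(d-1)$. The weakly singular remainder is again controlled by Young/Schur, while the principal-value part is exactly of Calder\'on--Zygmund type and is bounded on $L^2$ by the Calder\'on--Zygmund theorem. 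Verifying the precise homogeneity and the cancellation (zero spherical average) of the singular part is the step I expect to require the most care, and it is where the special structure of Bogovskii's kernel is essential.

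Once $\|\mathcal B f\|\le C|f|$ is established on the dense subspace $\mathcal A$, the operator extends uniquely to a bounded linear map $\mathcal B\colon\widehat L^2(D)\to H_0^1(D)^d$. Because $\div\colon H_0^1(D)^d\to L^2(D)$ is continuous (Subsection~\ref{ss:diff}) and the identity $\div\mathcal B f=f$ holds on $\mathcal A$, it passes to the limit and holds for every $f\in\widehat L^2(D)$. A softer, purely functional-analytic argument for the mere existence of a bounded right inverse of $\div$ can in fact be extracted from Lemma~\ref{lem:nabla-inv} by viewing $\div$ as, up to sign, the adjoint of $\nabla$ and invoking the closed-range theorem; however, that route neither uses the star-shapedness nor reproduces the explicit operator, so I would favour the constructive proof above.
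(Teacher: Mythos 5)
The paper contains no proof of Proposition \ref{prp:bogovskii} to compare against: the result is recalled verbatim from Bogovskii's paper \cite{Bogovskii}. Your sketch reconstructs precisely that cited construction, and it is sound in outline: your formula is the Bogovskii kernel after the substitution $r=s|x-y|$ in the usual representation $G(x,y)=\frac{x-y}{|x-y|^d}\int_{|x-y|}^{\infty}\omega\bigl(y+r\frac{x-y}{|x-y|}\bigr)r^{d-1}\,dr$; the support argument via the convex combination $x=\frac1s w+\frac{s-1}{s}y$ with $w\in\operatorname{supp}\omega\subset B$ and $y\in\operatorname{supp}f$ is exactly how star-shapedness enters (note it is the compactness of $\operatorname{supp}\omega$ in $B$ and of $\operatorname{supp}f$ in $D$ that keeps $x$ in a fixed compact subset of $D$); and the identity $\div\vec u=f-\omega\int_D f\,dy$ is the correct one, so the mean-zero hypothesis kills the spurious term. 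The kernel bound $|G(x,y)|\le C|x-y|^{1-d}$ and the resulting $L^2$-bound on $\vec u$ are also right.

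Two caveats at the step you yourself flag as the delicate one. First, differentiating under the integral produces, besides the weakly singular remainder and the principal-value part, a zeroth-order term of the form $c_{ij}(x)f(x)$ with bounded $c_{ij}$ (the contribution of the moving singularity); it is trivially $L^2$-bounded but must appear in the decomposition, otherwise the identity for $\partial_j u_i$ is wrong. Second, the singular part has the form $K(x,x-y)$ with genuine dependence on the base point $x$, so the classical convolution-type Calder\'on--Zygmund theorem does not apply verbatim; one needs the variable-kernel version (kernel homogeneous of degree $-d$ and of zero spherical mean in the second argument, smooth in the first). Neither issue derails the plan. Your closing remark deserves emphasis: for the proposition \emph{as stated} --- mere existence of a bounded linear right inverse of $\div\colon H_0^1(D)^d\to\widehat{L}^2(D)$ --- the soft argument via Proposition \ref{prp:grad-est}, Lemma \ref{lem:nabla-inv} and the closed range theorem already suffices and requires no star-shapedness; the explicit construction is what one pays for when one wants the concrete operator (and, beyond the present paper's needs, its $L^p$ and higher-order mapping properties).
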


\section{Linearized Equations of Low Compressible Fluid Motion}
Consider viscous barotropic fluid with state equation $\rho = F(p)$, where
$\rho$ and $p$ are density and pressure respectively. Let us linearize this
equation near some reference pressure $p_{ref}$:
$$
\rho=F(p) \approx F(p_{ref}) + F'(p_{ref})(p-p_{ref}).
$$
For brevity assume that $p_{ref}=0$ and denote $F(0)=\rinc>0$, $F'(0)=\comp$.
From the physical point of view $1/\comp = \frac{\partial p }{\partial \rho} = c^2 >0$,
where $c$ is the speed of sound. Ultimately we have $\rho = \rinc + \comp p.$

We are going to study the following equations:
\begin{equation}\label{eq:comp}
\begin{gathered}
\rho_t + \rho_0 \div \vec u = 0,\\
\rinc\vec u_t + \nabla p = \mu \Delta \vec u + \eta \nabla \div \vec u + \rho \vec f,\\
\rho = \rinc+\comp p.
\end{gathered}
\end{equation}
Here
\begin{list}{}{}
\item $\rho = \rho (x,t)$ \quad ($\rho(x,t)\in \mathbb R$), \qquad
      $p=p(x,t)$ \quad ($p(x,t) \in \mathbb R$),
\item $x\in D \subset (0,T)$,
 $t\in (0,T)\subset \mathbb R$,
\item $\vec u = \vec u(x,t)$ is the velocity ($\vec u(x,t)\in \mathbb R^d$),
\item $\vec f = \vec f(x,t)$ is the external force (per unit volume).
\end{list}
The coefficients of viscosity $\mu > 0$ and $\eta \ge 0$ are constant
throughout the paper.
The constant $\comp>0$ is referred to as the \emph{compressibility factor}
\cite{Shifrin}.

Equations \eqref{eq:comp} arise as the linearisation of the equations of compressible
fluid motion near some solution $\{\uinc, \pinc\}$ to the corresponding equations
of incompressible fluid.
The convective terms $(\uinc, \nabla)\comp p$
and $(\uinc, \nabla)\vec u$ are omitted for the sake of technical simplicity.

Let us consider the following initial--boundary conditions for the equations \eqref{eq:comp}:
\begin{equation}\label{eq:ocomp}
\vec u|_{\partial D} = 0, \quad \vec u|_{t=0} = \vec u_0, \quad p|_{t=0}=p_0.
\end{equation}
It is clear that the solution to the problem \eqref{eq:comp}, \eqref{eq:ocomp}
depends on the compressibility factor $\comp$:
$$\{\vec u, p\} = \{\vec u_\comp , p_\comp\}.$$
Our goal is to study the convergence of these solutions when
compressibility tends to zero, i.e. $\comp \to 0$.
But before doing this, we need to state (and prove) the
existence and uniqueness theorems for the problem \eqref{eq:comp}, \eqref{eq:ocomp}
and for the corresponding incompressible system.

\subsection{Existence and Uniqueness of Weak Solutions}

Consider a non-homogeneous form of the problem \eqref{eq:comp}, \eqref{eq:ocomp}:
\begin{gather}
\begin{gathered}\label{eq:gcomp}
\rho_t + \rho_0 \div \vec u = \sigma, \quad \rho = \comp p,\\
\rinc\vec u_t + \nabla p = \mu \Delta \vec u + \eta \nabla \div \vec u + \rho \vec f + \vec s,\\
\end{gathered}\\
\begin{gathered}\label{eq:ogcomp}
\vec u|_{\partial D} = 0, \quad \vec u|_{t=0} = \vec u_0, \quad p|_{t=0}=p_0.
\end{gathered}
\end{gather}
Setting $\sigma=0$ and $\vec s= \rinc \vec f$ we clearly obtain 
the problem \eqref{eq:comp}, \eqref{eq:ocomp}.

Assume that 
$$
\begin{gathered}
\sigma = L^2(0,T;L^2(D)), \quad \vec s \in L^2(0,T;H^{-1}(D)^d), \\
\vec f \in L^\infty (Q_T)^d, \quad \vec u_0 \in L^2(D)^d, \quad p_0\in L^2(D).
\end{gathered}
$$

\begin{defn}\label{def:weak1}
A pair 
$$\{\vec u, p\} \in L^2(0,T;H_0^1(D)^d)\times L^2(0,T;L^2(D))$$
is called a \emph{weak solution} to the problem \eqref{eq:gcomp}, \eqref{eq:ogcomp},
if for all $\varphi \in C_0^\infty([0,T))$, $\vec v \in H_0^1(D)^d$ and
$r\in L^2(D)$
\begin{gather}\label{eq:def:weak1:1}
-\int_0^T ( \rho, r ) \, \varphi_t \, dt - (\comp p_0, r) \varphi(0)
+ \int_0^T (\rho_0 \div \vec u, r ) \, \varphi \, dt =
 \int_0^T ( \sigma, r ) \, \varphi \, dt,
\end{gather}
\begin{multline}\label{eq:def:weak1:2}
-\int_0^T \rinc (\vec u, \vec v) \, \varphi_t \, dt - \rinc (\vec u_0, \vec v) \varphi(0)
-\int_0^T ( p, \div \vec v ) \, \varphi \, dt = {}\\
{} = -\mu \int_0^T ((\vec u, \vec v)) \, \varphi \, dt
- \eta \int_0^T (\div \vec u, \div \vec v) \, \varphi \, dt + {}\\
{} + \int_0^T (\rho \vec f, \vec v) \, \varphi \, dt +
\int_0^T \db{\vec s, \vec v} \varphi \, dt,
\end{multline}
where $\rho = \comp p$.
(See subsection \ref{ss:dot-prod} for the notation.)
\end{defn}
\begin{rem}\label{rem:eqae}
From \eqref{eq:def:weak1:1} and \eqref{eq:def:weak1:2} it
follows that
$$
\begin{gathered}
\vec u \in \widetilde{W}^{1,2}(0,T;H_0^1(D)^d),\\
p\in W^{1,2}(0,T;L^2(D))
\end{gathered}
$$
and the equations \eqref{eq:gcomp}
hold for a.e. $t\in[0,T]$
in sense of notation, introduced in subsections \ref{ss:diff} and \ref{ss:evolution}.
Consequently, for a.e. $t\in[0,T]$
$$
\begin{gathered}
(\rho_{t} + \rho_0\div \vec u, p) = (\sigma, p), \\
\rinc\db{\vec u_{t},\vec u} - (p, \div \vec u) = \db{\mu \Delta \vec u
+ \eta \nabla \div \vec u + \rho \vec f + \vec s, \vec u} \\
\end{gathered}
$$

Due to Propositions \ref{prp:embeddingSC} and \ref{prp:embeddingSLC} 
the functions $\vec u$ and $p$ can be redefined so that they
have well-defined values $\overline{\vec u}(t)$ and $\overline{p}(t)$
at each $t\in [0,T]$.
Integrating the equations above
with respect to $t$ using Remarks \ref{rem:embeddingSC} and \ref{rem:embeddingSLC:1}
we obtain the \emph{energy equality}:
\begin{multline}\label{eq:energy}
\frac12 \left.\left(\rinc|\overline{\vec u}|^2 + \frac{\alpha}{\rinc}
 |\overline{p}|^2 \right)\right|_\xi^\tau +
 \int_\xi^\tau (\mu\|\vec u\|^2  + \eta |\div \vec u|^2) \, dt ={} \\
{}= \int_\xi^\tau (\comp p \vec f, \vec u) \, dt + \int_\xi^\tau
 \db{\vec s, \vec u} dt
 + \int_\xi^\tau \frac{1}{\rinc}(\sigma, p) \, dt,
\end{multline}
where $\xi,\tau\in[0,T]$ are arbitrary.

By Remark \ref{rem:embeddingSLC}, from
equations \eqref{eq:def:weak1:1}, \eqref{eq:def:weak1:2}
we conclude that $\overline{\vec u}(0) = \vec u_0$ and
$\overline{p}(0)=p_0$.
\end{rem}

Definition \ref{def:weak1} is equivalent
to the following one:

\begin{defn}\label{def:weak2}
A pair 
$$\{\vec u, p\} \in L^2(0,T;H_0^1(D)^d)\times L^2(0,T;L^2(D))$$
is called a \emph{weak solution} to the problem \eqref{eq:gcomp}, \eqref{eq:ogcomp},
if the equations \eqref{eq:gcomp} hold in sense of distributions on $Q_T$,
and the initial values of $\vec u$ and $p$ are $\vec u_0$ and $p_0$
respectively.
\end{defn}
\begin{proof}[Proof of equivalence of \ref{def:weak1} and \ref{def:weak2}]
To prove the implication \ref{def:weak2} $\Rightarrow$ \ref{def:weak1}
consider $\vec v\in H_0^1(D)^d$ and $r\in L^2(D)$. There exist
$(\vec v_n) \subset C_0^\infty(D)^d$ and $(r_n) \subset C_0^\infty(D)$
such that $\vec v_n \to \vec v$ and $r_n \to r$ in $H_0^1(D)^d$
and $L^2(D)$ respectively. Then, for each $\varphi\in C_0^\infty (0,T)$
$$\begin{gathered}
\vec \Phi_n(x,t)=\vec v_n(x) \varphi(t) \in \mathcal D(Q_T)^d,\\
\Phi_n(x,t)=r_n(x) \varphi(t) \in \mathcal D(Q_T).
\end{gathered}$$
Integrating by parts the distributional formulation of the equations \eqref{eq:gcomp}
and passing to the limit when $n\to \infty$
we obtain \eqref{eq:def:weak1:1} and \eqref{eq:def:weak1:2}
with $\varphi\in C_0^\infty (0,T)$. Then, from
Propositions \ref{prp:embeddingSC} and \ref{prp:embeddingSLC}
it follows that \eqref{eq:def:weak1:1} and \eqref{eq:def:weak1:2}
hold with arbitrary $\varphi\in C_0^\infty ([0,T))$.

The implication \ref{def:weak1} $\Rightarrow$ \ref{def:weak2} follows
from the fact that the set
$$
\setof{\sum_{k=1}^N \varphi_k(t) r_k(x)}{N\in \mathbb N, \
 \varphi_k\in \mathcal D(0,T), \ r_k \in \mathcal D(D), \ k=1..N}
$$
is dense in $\mathcal D(Q_T)$ (see e.g. \cite{VS}).
\end{proof}

Now we are ready to state the main result of this section:
\begin{thm}\label{th:exist:comp}
The problem \eqref{eq:gcomp}, \eqref{eq:ogcomp} has a unique weak solution $\{\vec u, p\}$.
For this solution the following estimates are valid:
\begin{gather}
\|\vec u\|_{L^2(0,T;H_0^1(D)^d)} +
\|\vec u\|_{L^\infty(0,T;L^2(D)^d)} + {} \notag\\
{}+ \sqrt{\comp} \|p\|_{L^\infty(0,T; L^2(D))}
\le C_{\vec f, \comp} E,\label{eq:est1}\\
\|\vec u\|_{\widetilde{W}^{1,2}(0,T;H_0^1(D)^d)} \le
\frac{1}{\sqrt{\comp}} \tilde{C}_{\vec f, \comp} E,\label{eq:est2}
\end{gather}
where
\begin{multline*}
E \equiv \|\vec u_0\|_{L^2(D)^d} + \sqrt{\comp}\|p_0\|_{L^2(D)} + {}\\
{}+\frac{1}{\sqrt{\comp}}\|\sigma\|_{L^2(0,T;L^2(D))} + \|\vec s\|_{L^2(0,T;H^{-1}(D)^d)}
\end{multline*}
and the constants $C_{\vec f, \comp}, \tilde{C}_{\vec f, \comp}$
depend only on $\|\vec f\|_{L^\infty(Q_T)^d}$ and $\comp$. 
For fixed $\vec f$ these constants are bounded when $\comp \to 0$.
Moreover, if $p_0\in \widehat{L}^2(D)$ and 
$\sigma \in L^2(0,T;\widehat{L}^2(D))$, then 
$$
p\in W^{1,2}(0,T;\widehat{L}^2(D)).
$$
\end{thm}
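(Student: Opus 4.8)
The plan is to establish existence by a Galerkin scheme, uniqueness via the energy equality \eqref{eq:energy}, and the two estimates by feeding that equality into Lemma \ref{lem:mixed}. First I would fix bases $\{\vec w_k\}\subset H_0^1(D)^d$ and $\{\psi_j\}\subset L^2(D)$ that are orthonormal in $L^2(D)^d$ and $L^2(D)$ respectively, and seek $\vec u_m=\sum_{k\le m} g_k(t)\vec w_k$, $p_m=\sum_{j\le m} h_j(t)\psi_j$ solving the finite-dimensional projections of \eqref{eq:def:weak1:1}--\eqref{eq:def:weak1:2} obtained by taking $\vec v=\vec w_k$, $r=\psi_j$. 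Since this is a linear system of ordinary differential equations for $(g_k,h_j)$ whose coefficients and data are integrable in $t$, the Carath\'eodory theorem provides a unique absolutely continuous solution on all of $[0,T]$.

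The central step is the a priori bound, uniform in $m$. Multiplying the projected momentum equations by $g_k$, the projected continuity equation by $h_j/\rinc$, and summing reproduces \eqref{eq:energy} at the Galerkin level; writing $I=\tfrac12(\rinc|\vec u_m|^2+\tfrac{\comp}{\rinc}|p_m|^2)$ and $J=(\mu\|\vec u_m\|^2+\eta|\div\vec u_m|^2)^{1/2}$, the differentiated form reads $I'+J^2=(\comp p_m\vec f,\vec u_m)+\db{\vec s,\vec u_m}+\tfrac1{\rinc}(\sigma,p_m)$. I would then recast the right-hand side into the shape \eqref{eq:mixed}: using $\comp|p_m|^2\le 2\rinc I$ and $|\vec u_m|^2\le 2I/\rinc$ one gets $(\comp p_m\vec f,\vec u_m)\le 2\sqrt\comp\,\|\vec f\|_{L^\infty(Q_T)^d}\,I$, so $a=2\sqrt\comp\,\|\vec f\|_{L^\infty}$; the term $\db{\vec s,\vec u_m}\le\mu^{-1/2}\|\vec s\|_{H^{-1}}J$ yields $b=\mu^{-1/2}\|\vec s\|_{H^{-1}}$; and $\tfrac1{\rinc}(\sigma,p_m)$, bounded via $|p_m|\le(2\rinc I/\comp)^{1/2}$ and Young's inequality, contributes a constant to $a$ and a term $c\sim\comp^{-1}|\sigma|^2$. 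Lemma \ref{lem:mixed} then bounds $\|J\|_{L^2}$ and $\|I\|_{L^\infty}$ through $I(0)$, $\|c\|_{L^1}$, $\|b\|_{L^2}$ with a constant $C_a$ depending only on $\|a\|_{L^1}=2\sqrt\comp\,\|\vec f\|_{L^\infty}T$, which stays bounded as $\comp\to0$; unwinding the definitions of $I$, $J$, $b$, $c$ reproduces exactly $E$ and gives \eqref{eq:est1}.

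For the limit passage I would extract $\vec u_m\wto\vec u$ in $L^2(0,T;H_0^1(D)^d)$ and $p_m\swto p$ in $L^\infty(0,T;L^2(D))$ from the uniform bounds; since every term of \eqref{eq:def:weak1:1}--\eqref{eq:def:weak1:2} is linear (the forcing $\comp(p_m\vec f,\vec v)$ converges because $\vec f\cdot\vec v\in L^2(D)$), the limit is a weak solution, its initial data being recovered via Remark \ref{rem:embeddingSLC}, and \eqref{eq:est1} survives by weak lower semicontinuity of the norms. For \eqref{eq:est2} I would then use the momentum equation, which by Remark \ref{rem:eqae} holds for a.e.\ $t$ in $H^{-1}(D)^d$: reading $\rinc\vec u_t=-\nabla p+\mu\Delta\vec u+\eta\nabla\div\vec u+\comp p\vec f+\vec s$ and bounding each term with the operator norms of subsection \ref{ss:diff} and the embedding $L^2(D)\hookrightarrow H^{-1}(D)$, every contribution is controlled by $E$ except $\|p\|_{L^2(0,T;L^2)}\le\sqrt T\,\|p\|_{L^\infty(0,T;L^2)}\le C\comp^{-1/2}E$ (by \eqref{eq:est1}), which is the source of the $\comp^{-1/2}$ factor.

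Uniqueness follows by applying \eqref{eq:energy} to the difference of two solutions, which solves the homogeneous problem ($\sigma=0$, $\vec s=0$, zero initial data): then $I'\le 2\sqrt\comp\,\|\vec f\|_{L^\infty}I$ with $I(0)=0$, so Proposition \ref{prp:Gronwall} forces $I\equiv0$. For the final claim I would integrate the continuity equation $\comp p_t+\rinc\div\vec u=\sigma$ over $D$: since $\vec u\in H_0^1(D)^d$ makes $\int_D\div\vec u\,dx=0$ and $\sigma(t),p_0\in\widehat L^2(D)$, it follows that $\tfrac{d}{dt}\int_D p\,dx=0$ with $\int_D p_0\,dx=0$, so $\int_D p(t)\,dx=0$ and, from $\comp p_t=\sigma-\rinc\div\vec u$, also $\int_D p_t(t)\,dx=0$, giving $p\in W^{1,2}(0,T;\widehat L^2(D))$. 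The main obstacle is the bookkeeping of the powers of $\comp$ in the a priori estimate: the terms of the energy balance must be distributed among $a$, $b$, $c$ so that $C_a$ stays bounded as $\comp\to0$ while the explicit $\comp^{\pm1/2}$ factors in $E$ and in \eqref{eq:est2} emerge correctly, the delicate point being that the forcing term $(\comp p_m\vec f,\vec u_m)$ must land in the $aI$ slot with $a\sim\sqrt\comp$ rather than spoiling the uniformity.
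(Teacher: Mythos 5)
Your proposal is correct and follows the paper's own route in all essentials: a Faedo--Galerkin scheme, the energy identity \eqref{eq:energy-m} recast as $I'+J^2\le aI+bJ+c$ with exactly the paper's bookkeeping ($a\sim 1+\sqrt{\comp}\,\|\vec f\|_{L^\infty(Q_T)^d}$, $b\sim\|\vec s\|_{H^{-1}(D)^d}$, $c\sim\comp^{-1}|\sigma|^2$, so that $C_a$ stays bounded as $\comp\to 0$), Lemma \ref{lem:mixed}, Alaoglu--Bourbaki extraction, and the momentum equation read in $H^{-1}(D)^d$ to get \eqref{eq:est2}, with the $\comp^{-1/2}$ correctly traced to bounding $\|p\|_{L^2(0,T;L^2(D))}$ through \eqref{eq:est1}. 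Two local deviations are worth recording. First, for uniqueness you apply Gronwall directly to the energy equality of the difference of two solutions, while the paper applies \eqref{eq:est1} to that difference; since the paper justifies \eqref{eq:est1} for \emph{arbitrary} weak solutions precisely by invoking the same energy equality (Remark \ref{rem:eqae}), the two arguments are equivalent in substance, and yours is marginally more economical because it never needs \eqref{eq:est1} beyond the constructed solution. Second, for the final claim $p\in W^{1,2}(0,T;\widehat{L}^2(D))$ the paper re-runs the Galerkin construction with $\{e_j\}$ an orthogonal basis of $\widehat{L}^2(D)$, so that mean-zero is built into every approximant; you instead work a posteriori, pairing the continuity equation with the constant function $1$ and using $\int_D \div\vec u\,dx=0$ for $\vec u\in H_0^1(D)^d$ to conclude that $\int_D p(t)\,dx$ is conserved and $p_t(t)\in\widehat{L}^2(D)$ a.e. Your version is more direct and also yields the sharper observation (exploited later in the paper's Section 4) that the mean of the pressure is a conserved quantity; the paper's version has the mild advantage of not requiring the a.e.-in-$t$ interpretation of the equation, though that interpretation is already supplied by Remark \ref{rem:eqae}, so no gap results. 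The only point stated too quickly is the limit passage ``since every term is linear'': strictly, the Galerkin identities hold only for test functions in $\mspan\{\vec w_k\}_{k\le m}$, so one must first fix a test function in a finite span, let $m\to\infty$, and then use density (the paper packages this as the boundedness of the functionals $F_m$, $G_m$ vanishing on growing spans) --- but this is the standard repair and does not affect correctness.
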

\begin{proof}
We will use Faedo--Galerkin method. Let $\{\vec e_j\}_{j=1}^\infty$ be an orthonormal
with respect to $((\cdot,\cdot))$ basis in $H_0^1(D)^d$ and let $\{e_j\}_{j=1}^\infty$
be an orthonormal basis in $L^2(D)$. Then,
since $H_0^1(D)^d$ is dense in $L^2(D)$, for $m\in \mathbb N$ there exist
$$
\begin{gathered}
p^0_m = \sum_{j=1}^m \tilde c_{0,j} e_j, \quad \tilde c_{0,j}=(p_0, e_j),\\
\vec u^0_m = \sum_{j=1}^m c^{(m)}_{0,j} \vec e_j, \quad c^{(m)}_{0,j}\in \mathbb R,
\end{gathered}
$$
such that $p^0_m \to p_0$ in $L^2(D)$ and $\vec u^0_m \to \vec u_0$
in $L^2(D)^d$
when $m\to \infty$.

Let us look for the solution in the form
$$
\vec u_m = \sum_{j=1}^{m} c^{(m)}_j(t) \vec e_j, \quad
p_m = \sum_{j=1}^{m} \tilde c^{(m)}_j(t) e_j, 
$$
where the functions $c^{(m)}_j(t)$ and $\tilde c^{(m)}_j(t)$ are absolutely continuous.
Consider the following ``approximate'' system:
$$
\begin{gathered}
(\rho_{m,t} + \rinc \div \vec u_m, e_i) = (\sigma, e_i) \ \text{ a.e. on } [0,T], \\
\db{\rinc\vec u_{m,t} + \nabla p_m, \vec e_i} = \db{\mu \Delta \vec u_m
+ \eta \nabla \div \vec u_m + \rho_m \vec f + \vec s, \vec e_i} \ \text{ a.e. on } [0,T], \\
i= 1..m, \quad \rho_m = \comp p_m, \\
\vec u_m|_{t=0} = \vec u^0_m, \quad p_m|_{t=0} = p^0_m.
\end{gathered}
$$
This system represents a Cauchy problem for a linear system of
first order differential equations. Under
our assumptions for each $m$ it has a unique solution 
$\{c^{(m)}_j(t), \tilde c^{(m)}_j(t)\}_{j=1}^m$
such that 
$$\{u_m, p_m\} \in
W^{1,2}(0,T;H_0^1(D)^d) \times W^{1,2}(0,T;L^2(D)).$$

Since $\vec u_m(t) \in \mspan \{\vec e_j\}_{j=1}^m$
and $p_m(t) \in \mspan \{e_j\}_{j=1}^m$, the following equations 
hold a.e. on $[0,T]$:
$$
\begin{gathered}
(\rho_{m,t} + \rinc \div \vec u_m, p_m) = (\sigma, p_m), \\
(\rinc\vec u_{m,t},\vec u_m) - (p_m, \div \vec u_m) = \db{\mu \Delta \vec u_m
+ \eta \nabla \div \vec u_m + \rho_m \vec f + \vec s, \vec u_m} \\
\end{gathered}
$$
Hence, recalling the notation from subsection \ref{ss:dot-prod},
by Remarks \ref{rem:embeddingSC} and \ref{rem:embeddingSLC:1}
\begin{multline}\label{eq:energy-m}
\frac12 \left(\rinc|\vec u_m|^2 + \frac{\alpha}{\rinc} |p_m|^2 \right)_t +
 \mu \|\vec u_m\|^2 + \eta |\div \vec u_m|^2 ={} \\
{}= (\comp p_m \vec f, \vec u_m) + \db{\vec s, \vec u_m} + 
\frac{1}{\rinc}(\sigma, p_m).
\end{multline}

Let us make some auxiliary estimates. First, from Young's inequality,
$$
\frac{\sqrt{\comp}}{\rinc} (\sqrt{\comp} p_m \vec f, \rinc \vec u_m)
\le \frac12 \frac{\sqrt{\comp}}{\rinc} \|\vec f\|_{L^\infty (Q_T)^d}
\left(\comp |p_m|^2 + \rinc^2|\vec u_m|^2\right)
$$
and $(\sigma, p_m) \le \frac12 \left(\comp|p_m|^2 + \frac{1}{\comp} |\sigma|^2\right)$.
Second, $\db{\vec s, \vec u_m} \le \|\vec s\|_{H^{-1}(D)^d}\|\vec u_m\|$.
Then, denoting
$$
\begin{gathered}
I(t) = \frac12 \left(\rinc|\vec u_m(t)|^2 + \frac{\alpha}{\rinc}
 |p_m(t)|^2\right),
 \quad J(t) = \sqrt{\mu} \|\vec u_m(t)\|,\\
 a= 1 + \sqrt{\comp} \|\vec f\|_{L^\infty(Q_T)^d}, \quad
 b(t)= \frac{1}{\sqrt{\mu}}\|\vec s(t)\|_{H^{-1}(D)^d}, \quad
 c(t)= \frac{1}{2\rinc\alpha} |\sigma(t)|^2 
\end{gathered}
$$
from \eqref{eq:energy-m} we obtain
$$
I' + J^2 \le a I + b J + c
$$
From this inequality, by Lemma \ref{lem:mixed}, we have
\begin{multline*}
\|J\|_{L^2(0,T)} \le C_{\vec f, \comp} \left(
 |\vec u^0_m| + \sqrt{\comp} |p^0_m| + \right.{} \\
\left.{}  + \frac{1}{\sqrt{\comp}} \|\sigma\|_{L^2(0,T;L^2(D))}  +
  \|\vec s\|_{L^2(0,T;H^{-1}(D)^d)}
 \right),
\end{multline*}
\begin{multline*}
 \|I\|_{L^\infty(0,T)} \le C_{\vec f, \comp} \left(
  |\vec u^0_m|^2 + \comp |p^0_m|^2 + {}\right. \\
 \left.{}  + \frac{1}{\comp} \|\sigma\|_{L^2(0,T;L^2(D))}^2  +
   \|\vec s\|_{L^2(0,T;H^{-1}(D)^d)}^2
  \right).
\end{multline*}
From the proof of Lemma \ref{lem:mixed} it follows that
the constant $C_{\vec f, \comp}$ depends only on $\|\vec f\|_{L^\infty(Q_T)^d}$
and $\alpha$, and for each $\vec f$ this constant is bounded when $\comp\to 0$.

Thus we have proved that estimate \eqref{eq:est1} holds for the
``approximate solutions'' $\vec u_m$, $p_m$. Since any weak solution
to the problem \eqref{eq:gcomp}, \eqref{eq:ogcomp}
satisfies the energy equality \eqref{eq:energy}, the arguments
above imply that \eqref{eq:est1} holds for each weak solution
to the problem \eqref{eq:gcomp}, \eqref{eq:ogcomp} as well.

By Alaoglu--Bourbaki theorem (and Proposition \ref{prp:dual}),
there exists a subsequence (for brevity not renumbered) such that
$$
\begin{gathered}
\vec u_m \wto \vec u \text{\; in } L^2(0,T;H_0^1(D)^d), \\
\vec u_m \swto \vec u \text{\; in } L^\infty(0,T;L^2(D)^d), \\
p_m \swto p \text{\; in }  L^\infty(0,T;L^2(D))
\end{gathered}
$$
and the limits $\vec u$ and $p$ satisfy \eqref{eq:est1}.

Now let $\varphi\in C_0^\infty([0,T))$ and for $r\in L^2(D)$ denote
$$
F_m(r) \equiv \int_0^T \left(-\rho_m \varphi_t + (\rinc\div \vec u_m -
\sigma)\varphi, r \right ) dt - (\comp p_m(0), r) \varphi(0),
$$
It is clear that $(F_m)$ is bounded in $L^2(D)^*$ and for
each $r\in \mspan \{e_j\}_{j=1}^m$ $F_m(r)=0$.
Then, since $\{e_j\}_{j=1}^\infty$ is an orthonormal basis of $L^2(D)$,
$$\lim_{m\to\infty} F_m(r) = 0.$$
On the other hand,
$$\lim_{m\to\infty} F_m(r)= F(r) \equiv \int_0^T \left(-\rho \varphi_t +
(\rinc \div \vec u -
\sigma)\varphi, r \right ) \, dt - (\comp p_0, r) \varphi(0),$$
where $\rho = \comp p$. Thus we have shown that \eqref{eq:def:weak1:1}
holds. Similar arguments for the sequence of the functionals
\begin{multline*}
G_m(\vec v) \equiv \int_0^T \left(-\rinc\vec u_m \varphi_t, \vec v\right) dt
 - \int_0^T ( p_m, \div \vec v) \varphi \, dt
 - \rinc(\vec u_m(0), \vec v) \varphi(0) -{}\\{}-
 \int_0^T \db{\mu \Delta \vec u_m + \eta \nabla \div \vec u_m +
  \rho_m \vec f + \vec s, \vec v} \varphi \, dt, \qquad \vec v\in H_0^1(D)^d
\end{multline*}
imply that \eqref{eq:def:weak1:2} holds.

From \eqref{eq:def:weak1:1} and \eqref{eq:def:weak1:2} we conclude that
a.e. on $[0,T]$
$$
\begin{gathered}
|p_t| \le \frac{1}{\comp} \left(|\rinc\div \vec u| + |\sigma|\right),\\
\|\vec u_t\|_{H^{-1}(D)^d} \le \|-\nabla p+ \mu \Delta \vec u +
 \eta \nabla \vec u+\comp p \vec f + \vec s\|_{H^{-1}(D)^d}/\rinc,
\end{gathered}
$$
hence 
$\vec u$ satisfies \eqref{eq:est2}.

If the problem \eqref{eq:gcomp}, \eqref{eq:ogcomp} had two weak solutions
$\{\vec u^1, p^1\}$ and $\{\vec u^2, p^2\}$, then the difference
$\{\vec u^1 - \vec u^2, p^1-p^2\}$ would be a weak solution to the
problem \eqref{eq:gcomp}, \eqref{eq:ogcomp} with zero
$\vec u_0$, $p_0$, $\sigma$ and $\vec s$. Hence, from
estimate \eqref{eq:est1} we would have
$\vec u^1 - \vec u^2=0$ and $p^1-p^2=0$. Consequently, the 
problem \eqref{eq:gcomp}, \eqref{eq:ogcomp} has
only one weak solution.

Finally, if $p_0\in \widehat{L}^2(D)$ and $\sigma \in L^2(0,T;\widehat{L}^2(D))$,
then taking $\{e_j\}_{j=1}^\infty$ as an orthogonal basis of $\widehat{L}^2(D)$
we obtain $p\in W^{1,2}(0,T;\widehat{L}^2(D))$.
\end{proof}

\begin{cor}\label{cor:exist:comp}
Suppose that 
$$\vec f \in L^\infty (Q_T)^d, \quad \vec u_0 \in L^2(D)^d, \quad p_0\in L^2(D).$$
Then the problem \eqref{eq:comp}, \eqref{eq:ocomp} has a unique weak solution $\{\vec u, p\}$.
\end{cor}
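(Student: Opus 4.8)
The plan is to obtain the corollary as an immediate specialization of Theorem \ref{th:exist:comp}. As already noted just before Definition \ref{def:weak1}, the problem \eqref{eq:comp}, \eqref{eq:ocomp} is precisely the non-homogeneous problem \eqref{eq:gcomp}, \eqref{eq:ogcomp} with the particular data $\sigma = 0$ and $\vec s = \rinc \vec f$. Indeed, the constant $\rinc$ in the state law $\rho = \rinc + \comp p$ is annihilated by $\partial_t$, so the continuity equation of \eqref{eq:comp} reduces to that of \eqref{eq:gcomp} with $\rho = \comp p$ and $\sigma = 0$, while the body force $\rho\vec f = (\rinc + \comp p)\vec f$ splits into $\comp p\,\vec f$ plus the source term $\vec s = \rinc \vec f$. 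Hence the entire content of the proof is to check that this choice of data meets the hypotheses of the theorem, after which uniqueness and existence transfer automatically.

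I would verify the integrability requirements in turn. First, $\sigma = 0$ lies trivially in $L^2(0,T;L^2(D))$. Second, and this is the one point deserving an explicit word, I must show $\vec s = \rinc \vec f \in L^2(0,T;H^{-1}(D)^d)$. Since $D$ is bounded and $(0,T)$ is a finite interval, $Q_T$ has finite measure, so the hypothesis $\vec f \in L^\infty(Q_T)^d$ gives $\vec f \in L^2(0,T;L^2(D)^d)$; composing with the continuous embedding $L^2(D)^d \hookrightarrow H^{-1}(D)^d$ (the map $g \mapsto (g,\cdot)$, i.e.\ the embedding $\pi$ of the evolution triple $(H_0^1(D)^d, L^2(D)^d, H^{-1}(D)^d)$, bounded via the Poincar\'e inequality because the $H_0^1$-norm is $((\cdot,\cdot))$) yields the claim after multiplication by the constant $\rinc$. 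The remaining hypotheses $\vec f \in L^\infty(Q_T)^d$, $\vec u_0 \in L^2(D)^d$ and $p_0 \in L^2(D)$ are assumed outright.

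With the hypotheses in place, Theorem \ref{th:exist:comp} applies verbatim and produces a unique weak solution $\{\vec u, p\}$ of \eqref{eq:gcomp}, \eqref{eq:ogcomp} for this data, which by the identification above is the sought unique weak solution of \eqref{eq:comp}, \eqref{eq:ocomp}. I do not anticipate any genuine obstacle: the statement is a corollary precisely because all of the analytic difficulty, namely the Faedo--Galerkin construction, the energy estimate through Lemma \ref{lem:mixed}, and the difference argument for uniqueness, has already been absorbed into Theorem \ref{th:exist:comp}. The sole thing that could be overlooked is the inclusion $L^\infty(Q_T)^d \subset L^2(0,T;H^{-1}(D)^d)$ for $\vec s$, which rests entirely on the finiteness of the measure of $D$.
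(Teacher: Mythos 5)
Your proposal is correct and matches the paper's (implicit) argument exactly: the paper states the corollary without proof, having already remarked before Definition \ref{def:weak1} that setting $\sigma = 0$ and $\vec s = \rinc \vec f$ in \eqref{eq:gcomp}, \eqref{eq:ogcomp} recovers \eqref{eq:comp}, \eqref{eq:ocomp}, so the corollary is an immediate specialization of Theorem \ref{th:exist:comp}. Your explicit verification that $\vec f \in L^\infty(Q_T)^d$ implies $\rinc\vec f \in L^2(0,T;H^{-1}(D)^d)$ via the boundedness of $Q_T$ and the embedding $L^2(D)^d \hookrightarrow H^{-1}(D)^d$ is a worthwhile detail the paper leaves tacit, and it is carried out correctly.
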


\subsection{Incompressible Limit}
When $\comp=0$, equations \eqref{eq:ocomp} formally turn into
nonsteady Stokes equations for incompressible fluid:

\begin{equation}\label{eq:inc}
\begin{gathered}
\div \uinc = 0,\\
\rinc\uinc_t + \nabla \pinc = \mu \Delta \uinc + \rinc \vec f.
\end{gathered}
\end{equation}
Consider the following initial--boundary conditions for the equations \eqref{eq:inc}:
\begin{equation}\label{eq:oinc}
\begin{gathered}
\uinc|_{\partial D} = 0,\quad
\uinc|_{t=0} = \uinco.
\end{gathered}
\end{equation}

The problem \eqref{eq:inc}, \eqref{eq:oinc} is well-studied, so we will only
state some standard results for it.
\begin{defn}
A pair 
$$\{\uinc, \pinc\} \in L^2(0,T;V(D))\times \mathcal D'(Q_T)$$
is called a \emph{weak solution} to the problem \eqref{eq:inc}, \eqref{eq:oinc},
if \eqref{eq:inc} holds in $\mathcal D'(Q_T)$ and
for all $\varphi \in C_0^\infty([0,T))$ and $\vec v \in V(D)$
\begin{multline}\label{eq:def:weak:3}
-\int_0^T \rinc(\uinc, \vec v) \, \varphi_t \, dt - \rinc(\uinco, \vec v) \varphi(0)
= {}\\
{} = -\mu \int_0^T ((\uinc, \vec v)) \, \varphi \, dt
 + \int_0^T (\rinc \vec f, \vec v) \, \varphi \, dt
\end{multline}
\end{defn}

\begin{prop}\label{th:exist:inc}
Let $\vec f \in L^2 (0,T;H^{-1}(D)^d)$, $\uinco \in J(D)$.
Then the problem \eqref{eq:inc}, \eqref{eq:oinc} has a unique weak solution
such that
$$
\begin{gathered}
\uinc \in \widetilde{W}^{1,2}(0,T; \vecsp V(D)),\\
\pinc = \partial_t Q, \quad Q \in C(0,T; \widehat{L}^2(D)).\\
\end{gathered}
$$
Moreover, if $\uinco \in V(D)$ and $\vec f\in L^2(0,T; L^2(D))$,
then
$$
\begin{gathered}
\uinc_t \in L^2(0,T; J(D)),\\
\pinc \in L^2(0,T; \widehat{L}^2(D)).\\
\end{gathered}
$$
\end{prop}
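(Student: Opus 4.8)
The plan is to use the Faedo--Galerkin method, exactly as in the proof of Theorem \ref{th:exist:comp}, but now working in the solenoidal space $V(D)$ rather than in $H_0^1(D)^d$; the decisive simplification is that when the test function $\vec v$ is solenoidal the pressure term drops out, since $\db{\nabla \pinc, \vec v} = -(\pinc, \div \vec v) = 0$. Concretely, I would fix a basis $\{\vec e_j\}$ of $V(D)$ orthonormal with respect to $((\cdot,\cdot))$, seek $\uinc_m = \sum_{j=1}^m c_j^{(m)}(t)\vec e_j$ with $\uinc_m(0)$ the projection of $\uinco$, and solve the linear system $\rinc(\uinc_{m,t},\vec e_i) + \mu((\uinc_m,\vec e_i)) = \rinc\db{\vec f,\vec e_i}$, $i=1,\dots,m$. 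Because $\pinc$ has been eliminated this is a genuine pressure-free Cauchy problem and has a unique absolutely continuous solution.

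For the a priori estimate I would test with $\uinc_m$ itself, obtaining the energy identity $\tfrac12\rinc\,\partial_t|\uinc_m|^2 + \mu\|\uinc_m\|^2 = \rinc\db{\vec f,\uinc_m}$. Bounding the right-hand side by $\|\vec f\|_{H^{-1}(D)^d}\|\uinc_m\|$ and invoking Lemma \ref{lem:mixed} with $I=\tfrac12\rinc|\uinc_m|^2$ and $J=\sqrt{\mu}\|\uinc_m\|$ yields uniform bounds on $\uinc_m$ in $L^\infty(0,T;J(D))$ and in $L^2(0,T;V(D))$. Extracting a subsequence with $\uinc_m \wto \uinc$ in $L^2(0,T;V(D))$ and $\uinc_m \swto \uinc$ in $L^\infty(0,T;J(D))$ and passing to the limit in the Galerkin relations (by the same functional argument as in Theorem \ref{th:exist:comp}) produces a limit satisfying \eqref{eq:def:weak:3}. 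Reading the equation as $\rinc\uinc_t = \mu\Delta\uinc + \rinc\vec f$ tested against $V(D)$ bounds $\uinc_t$ in $L^2(0,T;V(D)^*)$, so $\uinc \in \widetilde W^{1,2}(0,T;V(D))$ and, by Proposition \ref{prp:embeddingSLC}, $\uinc \in C(0,T;J(D))$ with $\overline{\uinc}(0)=\uinco$.

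I expect the recovery of the pressure to be the main obstacle: in the general case $\uinc_t$ lives only in $V(D)^*$, so the residual $\rinc\uinc_t-\mu\Delta\uinc-\rinc\vec f$ is not a priori an element of $H^{-1}(D)^d$ and Proposition \ref{prp:grad-repr} cannot be applied directly. The remedy is to integrate in time first. Setting $\vec U(t)=\int_0^t\uinc\,ds$, the time-integrated momentum equation reads, for every $\vec v\in V(D)$, $\rinc(\uinc(t)-\uinco,\vec v) = \mu\db{\Delta\vec U(t),\vec v} + \rinc\int_0^t\db{\vec f,\vec v}\,ds$; hence the functional $\ell(t)=\rinc(\uinc(t)-\uinco)-\mu\Delta\vec U(t)-\rinc\int_0^t\vec f\,ds$, which now genuinely belongs to $H^{-1}(D)^d$ and depends continuously on $t$, annihilates $V(D)$, i.e. $\ell(t)\in V(D)^\perp$. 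Applying Lemma \ref{lem:nabla-inv} pointwise in $t$ gives $Q(t)=-\nabla^{-1}\ell(t)\in\widehat L^2(D)$ with $\ell(t)=-\nabla Q(t)$; boundedness of $\nabla^{-1}$ together with continuity of $\ell$ yields $Q\in C(0,T;\widehat L^2(D))$, and differentiation in $\mathcal D'(Q_T)$ gives $\pinc=\partial_t Q$.

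Uniqueness follows from the energy identity applied to the difference of two solutions with vanishing data, which forces $\uinc\equiv 0$ and hence $\nabla\pinc\equiv 0$. For the improved regularity under $\uinco\in V(D)$ and $\vec f\in L^2(0,T;L^2(D))$, I would instead test the Galerkin system with $\uinc_{m,t}$, obtaining $\rinc|\uinc_{m,t}|^2 + \tfrac{\mu}{2}\partial_t\|\uinc_m\|^2 = \rinc(\vec f,\uinc_{m,t})$ and absorbing the right-hand side by Young's inequality; here $\uinco\in V(D)$ is precisely what controls $\|\uinc_m(0)\|$ and $\vec f\in L^2(0,T;L^2(D))$ is what makes $(\vec f,\uinc_{m,t})$ meaningful. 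This bounds $\uinc_m$ in $L^\infty(0,T;V(D))$ and $\uinc_{m,t}$ in $L^2(0,T;J(D))$, so in the limit $\uinc_t\in L^2(0,T;J(D))$. With $\uinc_t$ now in $L^2(0,T;L^2(D)^d)\subset L^2(0,T;H^{-1}(D)^d)$, the residual lies in $L^2(0,T;H^{-1}(D)^d)$ and belongs to $V(D)^\perp$ for a.e.\ $t$, so Lemma \ref{lem:nabla-inv} applied directly, without time integration, yields $\pinc\in L^2(0,T;\widehat L^2(D))$.
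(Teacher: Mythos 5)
Your proposal is correct and takes essentially the route the paper relies on: the paper gives no proof of this proposition, deferring entirely to Temam, and Temam's argument is precisely your scheme --- Faedo--Galerkin in $V(D)$ with the pressure term annihilated by solenoidal test functions, recovery of $Q\in C(0,T;\widehat{L}^2(D))$ from the \emph{time-integrated} momentum equation via the gradient-representation lemma (your observation that $\uinc_t\in V(D)^*$ blocks a direct application of Proposition \ref{prp:grad-repr} is exactly the right point), and the improved regularity for $\uinco\in V(D)$, $\vec f\in L^2(0,T;L^2(D))$ obtained by testing with $\uinc_{m,t}$, which is the ``minor modification'' the paper's remark alludes to. The only detail left implicit is the choice of approximations of $\uinco$ (an $L^2$-bounded projection for the first part, a $((\cdot,\cdot))$-bounded one for the second, both available for your basis), which is routine.
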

\begin{rem}
The first part of the proposition is proved in \cite{Temam}.
The case $\uinco \in V(D)$ is treated by a minor modification
of the proof from \cite{Temam}.
\end{rem}
\begin{rem}
Every weak solution to the problem \eqref{eq:inc}, \eqref{eq:oinc}
satisfies the following \emph{energy equality}:
\begin{equation}\label{eq:energy-inc}
\rinc|\overline{\uinc}|^2 |_\xi^\tau + 2\mu \int_\xi^\tau \|\uinc\|^2 \, dt
= 2\int_\xi^\tau (\rinc \vec f, \uinc)\, dt
\end{equation}
for all $\xi,\tau \in [0,T]$.
\end{rem}

Regularity of the solutions to the problem \eqref{eq:inc},
\eqref{eq:oinc} has been rigorously studied by V.A. Solonnikov.
Let us state a corollary of one of his results (see \cite[p. 126]{Lady}):

\begin{prop}[V.A. Solonnikov]\label{th:exist:inc:reg}
Let $\ell$ be a nonnegative integer number.
Let $\partial D \in C^{2\ell + 2}$,
$\vec f \in W_2^{2\ell, \ell} (Q_T)^d$
and $\uinco \in J(D) \cap W_2^{2\ell + 2}(D)^d$.
Assume that all the necessary 
compatibility conditions up to order $\ell$ are satisfied.
Then
$$
\uinc \in W^{2\ell+2, \ell + 1}(Q_T)^d, \quad
\nabla \pinc \in W_2^{2\ell, \ell}(Q_T)^d.
$$
\end{prop}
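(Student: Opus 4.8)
The plan is to obtain this statement as a direct corollary of Solonnikov's maximal regularity theorem for the nonstationary linearized Navier--Stokes system, in the form presented in \cite[p.~126]{Lady}. First I would recast the problem \eqref{eq:inc}, \eqref{eq:oinc} in the normalized form to which that theorem applies: dividing the momentum equation by $\rinc$ gives
\[
\uinc_t - \nu \Delta \uinc + \nabla \tilde{\pinc} = \vec f, \qquad \div \uinc = 0,
\]
with $\nu = \mu/\rinc$ and $\tilde{\pinc} = \pinc/\rinc$, together with $\uinc|_{\partial D}=0$ and $\uinc|_{t=0}=\uinco$. Solonnikov's theorem supplies, under the stated smoothness of $\partial D$ and the compatibility conditions, the a priori estimate
\[
\|\uinc\|_{W^{2\ell+2,\ell+1}(Q_T)^d} + \|\nabla \pinc\|_{W_2^{2\ell,\ell}(Q_T)^d}
\le C\bigl(\|\vec f\|_{W_2^{2\ell,\ell}(Q_T)^d} + \|\uinco\|_{W_2^{2\ell+1}(D)^d}\bigr),
\]
and the desired membership is read off directly once the hypotheses are checked to transcribe correctly.

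The key steps, in order, would be: (i) confirm that the regularity assumptions match, namely $\vec f \in W_2^{2\ell,\ell}(Q_T)^d$ and $\uinco \in J(D)\cap W_2^{2\ell+2}(D)^d \subset W_2^{2\ell+1}(D)^d$, with the solenoidality of $\uinco$ encoded in $\uinco \in J(D)$; (ii) invoke Proposition \ref{th:exist:inc} to guarantee that a weak solution exists and is unique, so that the regular solution produced by Solonnikov's theorem necessarily coincides with the weak solution under consideration; and (iii) transfer the regularity of $\uinc$ back through the momentum equation to recover $\nabla \pinc = \mu\Delta\uinc - \rinc\uinc_t + \rinc \vec f \in W_2^{2\ell,\ell}(Q_T)^d$, which holds precisely because each term on the right lies in that space once $\uinc \in W^{2\ell+2,\ell+1}(Q_T)^d$ is known.

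The main obstacle, and the only point requiring genuine care, is the compatibility conditions. To have the solution attain the boundary value and the full anisotropic parabolic regularity up to $t=0$, the time derivatives $\partial_t^k \uinc|_{t=0}$ (computed formally from the equation as polynomials in the spatial derivatives of $\uinco$, $\vec f|_{t=0}$, and the initial pressure) must satisfy the boundary and divergence constraints for $k=0,\dots,\ell$. Because the pressure is determined nonlocally from the velocity, these conditions couple $\uinco$, $\vec f$, and a Neumann-type problem for the initial pressure, and verifying that they are \emph{exactly} the conditions assumed in \cite{Lady} is the crux of the reduction. A secondary subtlety is the anisotropic scaling of the spaces $W_2^{2\ell,\ell}$, in which one temporal derivative counts as two spatial ones, so that $\nabla\pinc$ loses exactly two spatial (equivalently one temporal) order relative to $\uinc$; this is already built into Solonnikov's estimate and need only be tracked, not re-derived. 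Since the statement is explicitly quoted from \cite[p.~126]{Lady}, the proof in the paper is expected to be essentially a one-line reference, the substance lying entirely in checking that the hypotheses line up with those of the cited theorem.
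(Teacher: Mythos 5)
Your proposal is correct and coincides with what the paper actually does: the paper offers no proof of this proposition at all, stating it as a direct corollary of Solonnikov's theorem cited from \cite[p.~126]{Lady}, exactly the one-line reference you anticipated. Your supporting remarks --- the harmless normalization by the constant $\rinc$, the observation that $\uinco\in W_2^{2\ell+2}(D)^d$ is (if anything) stronger than the trace-theoretic requirement in the cited theorem, and the caveat that the compatibility conditions are simply assumed to match those of \cite{Lady} --- are consistent with the paper's treatment and require no correction.
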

\begin{rem}
It follows from Proposition \ref{th:exist:inc:reg} that
$\nabla \pinc \in W^{\ell,2}(0,T;L^2(\Omega))$.
Applying $\nabla^{-1}$ to $\nabla \pinc$ we get
$\pinc \in W^{\ell,2}(0,T; \widehat{L}^2(\Omega))$.
Then for $\ell \ge 1$
$$\pinc|_{t=0}=\nabla^{-1} P_{\vecsp G}(\Delta \uinco + \vec f(0)),$$
i.e. the pressure $\pinc$ has a well-defined initial value.
\end{rem}

\section{Convergence Towards the Incompressible Limit}
In this section we suppose that
 the assumptions of Corollary~\ref{cor:exist:comp} are satisfied.
Let $\{\vec u_\comp, p_\comp\}_{0<\comp<1}$ denote the family of the weak solutions
to the problem \eqref{eq:comp}, \eqref{eq:ocomp}.
Let $\{\uinc, \pinc\}$ denote a weak solution to
the problem \eqref{eq:inc}, \eqref{eq:oinc}.

\subsection{Convergence of the Velocity}
\begin{thm}\label{th:weaklim}
If $\comp \to 0$, then
\begin{gather}
\vec u_\comp \wto \uinc \text{\; in }
L^2(0,T; \vecsp H_0^1(D)^d), \label{eq:th:weaklim:velconv} \\
\vec u_\comp \swto \uinc \text{\; in }
L^\infty(0,T; \vecsp L^2(D)^d), \label{eq:th:weaklim:velconv2} \\
\nabla p_\comp \swto \nabla\pinc \text{\; in }
H^{-1}(Q_T)^d, \label{eq:th:weaklim:presconv}
\end{gather}
where $\{\uinc, \pinc\}$ is the solution to \eqref{eq:inc}, \eqref{eq:oinc}
with
\begin{equation}\label{eq:th:weaklim:initial}
\uinco = P_J \vec u_0.
\end{equation}
\end{thm}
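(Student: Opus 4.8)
The plan is to extract weak limits of the family $\{\vec u_\comp, p_\comp\}$, identify them with the unique incompressible solution, and then upgrade to convergence of the whole family by a subsequence argument. First I would record the uniform bounds. Setting $\sigma = 0$ and $\vec s = \rinc \vec f$ in Theorem~\ref{th:exist:comp}, the quantity $E$ reduces to $\|\vec u_0\|_{L^2(D)^d} + \sqrt{\comp}\,\|p_0\|_{L^2(D)} + \rinc\|\vec f\|_{L^2(0,T;H^{-1}(D)^d)}$, which stays bounded as $\comp \to 0$; since $C_{\vec f,\comp}$ is also bounded in this limit, estimate~\eqref{eq:est1} shows that $(\vec u_\comp)$ is bounded in $L^2(0,T;H_0^1(D)^d)$ and in $L^\infty(0,T;L^2(D)^d)$, and that $(\sqrt{\comp}\,p_\comp)$ is bounded in $L^\infty(0,T;L^2(D))$. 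In particular $\comp p_\comp \to 0$ in $L^\infty(0,T;L^2(D))$. By the Alaoglu--Bourbaki theorem (together with Proposition~\ref{prp:dual}) I would pass to a subsequence along which $\vec u_\comp \wto \vec w$ in $L^2(0,T;H_0^1(D)^d)$ and $\vec u_\comp \swto \vec w$ in $L^\infty(0,T;L^2(D)^d)$; the two limits coincide because both describe the same element of $\mathcal D'(Q_T)^d$.

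Next I would pass to the limit in the weak formulation of Definition~\ref{def:weak1}. In the continuity equation~\eqref{eq:def:weak1:1} with $\sigma=0$ the two terms carrying the factor $\comp$ vanish as $\comp\to 0$, while $\div\vec u_\comp \wto \div\vec w$ in $L^2(0,T;L^2(D))$ since $\div$ is bounded on $H_0^1(D)^d$; this gives $\div\vec w = 0$, so that $\vec w \in L^2(0,T;V(D))$ by the characterization of $V(D)$ as the solenoidal subspace of $H_0^1(D)^d$. In the momentum equation~\eqref{eq:def:weak1:2} I would restrict to test fields $\vec v\in V(D)$, which annihilates the terms $(p_\comp,\div\vec v)$ and $\eta(\div\vec u_\comp,\div\vec v)$; the term $(\comp p_\comp\vec f,\vec v)$ tends to zero by the bound on $\comp p_\comp$, and since $\vec u_0 - P_J\vec u_0 \in G(D) = J(D)^\perp$ is orthogonal to $\vec v$ we may replace $(\vec u_0,\vec v)$ by $(P_J\vec u_0,\vec v)$. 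Passing to the limit then reproduces exactly the weak formulation~\eqref{eq:def:weak:3} of the incompressible problem with $\uinco = P_J\vec u_0$; combined with $\div\vec w=0$ this identifies $\vec w$ as a weak solution of~\eqref{eq:inc},~\eqref{eq:oinc}, whence $\vec w = \uinc$ by the uniqueness in Proposition~\ref{th:exist:inc}.

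Because the limit $\uinc$ does not depend on the extracted subsequence, the standard subsequence argument upgrades the convergence to the whole family, yielding~\eqref{eq:th:weaklim:velconv} and~\eqref{eq:th:weaklim:velconv2}. For the pressure I would read $\nabla p_\comp$ off the momentum equation, writing $\nabla p_\comp = -\rinc\vec u_{\comp,t} + \mu\Delta\vec u_\comp + \eta\nabla\div\vec u_\comp + \comp p_\comp\vec f + \rinc\vec f$ in $\mathcal D'(Q_T)^d$. Since $\vec u_\comp\wto\uinc$ gives $\Delta\vec u_\comp\to\Delta\uinc$ and $\vec u_{\comp,t}\to\uinc_t$ in $\mathcal D'(Q_T)^d$, while $\div\vec u_\comp\wto 0$ gives $\nabla\div\vec u_\comp\to 0$ and $\comp p_\comp\vec f\to 0$, the right-hand side converges in $\mathcal D'(Q_T)^d$ to $-\rinc\uinc_t + \mu\Delta\uinc + \rinc\vec f = \nabla\pinc$.

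The delicate point, which I expect to be the main obstacle, is to turn this distributional convergence into $*$-weak convergence in $H^{-1}(Q_T)^d$. The difficulty is that estimate~\eqref{eq:est2} controls $\vec u_{\comp,t}$ only in $L^2(0,T;H^{-1}(D)^d)$ with a constant that blows up like $\comp^{-1/2}$, so no uniform bound for $\nabla p_\comp$ is available in that space. The resolution is to work in the space--time dual $H^{-1}(Q_T)^d$: every $\vec\Phi\in H_0^1(Q_T)^d$ vanishes on all of $\partial Q_T$, in particular at $t=0$ and $t=T$, so integration by parts in time gives $\db{\vec u_{\comp,t},\vec\Phi} = -\int_{Q_T}(\vec u_\comp,\vec\Phi_t)\,dx\,dt$, whence $\|\vec u_{\comp,t}\|_{H^{-1}(Q_T)^d} \le \|\vec u_\comp\|_{L^2(Q_T)^d}$ is bounded uniformly in $\comp$. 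The remaining four terms are bounded in $H^{-1}(Q_T)^d$ by the uniform bounds of the first paragraph, so $(\nabla p_\comp)$ is bounded in $H^{-1}(Q_T)^d$. Extracting a $*$-weakly convergent subsequence, identifying its limit with the distributional limit $\nabla\pinc$ (which therefore belongs to $H^{-1}(Q_T)^d$), and applying the subsequence argument once more yields~\eqref{eq:th:weaklim:presconv}.
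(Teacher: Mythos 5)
Your proposal is correct, and for the velocity assertions \eqref{eq:th:weaklim:velconv}, \eqref{eq:th:weaklim:velconv2} it follows the paper's route essentially verbatim: uniform bounds from \eqref{eq:est1}, Alaoglu--Bourbaki extraction, limit passage in \eqref{eq:def:weak1:1} to get $\div \vec w = 0$, limit passage in \eqref{eq:def:weak1:2} with $\vec v \in V(D)$ (so the pressure and $\eta$-terms drop out and $(\vec u_0, \vec v) = (P_J \vec u_0, \vec v)$), identification of the limit by uniqueness, and the subsequence--uniqueness trick for the whole family. You diverge from the paper in two places. First, at the identification step you assert that \eqref{eq:def:weak:3} together with $\div \vec w = 0$ already makes $\vec w$ ``a weak solution'' of \eqref{eq:inc}, \eqref{eq:oinc}; but in this paper a weak solution is a \emph{pair} $\{\uinc, \pinc\}$ for which \eqref{eq:inc} holds in $\mathcal D'(Q_T)$, so the uniqueness in Proposition \ref{th:exist:inc} applies only after a pressure partner for $\vec w$ has been produced. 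The paper does exactly this: it rewrites the limit identity as $\Phi(\vec v,\varphi)=\int_0^T \phi(\vec v,t)\,\varphi_t\,dt$, observes that the functional $\phi(\cdot,t)$ annihilates $V(D)$, applies the de Rham-type Lemma \ref{lem:nabla-inv} to obtain $Q\in C(0,T;\widehat{L}^2(D))$ with $\phi(\cdot,t)=\db{\nabla Q(t),\cdot}$, and sets $\pinc=\partial_t Q$. Your shortcut is repaired either by inserting that construction or by proving uniqueness of the velocity component directly from \eqref{eq:def:weak:3} via an energy argument; either way the missing step is standard, so this is a gloss rather than a fatal gap. Second, for \eqref{eq:th:weaklim:presconv} you are more complete than the paper, which dismisses the pressure convergence in one sentence: your observation that \eqref{eq:est2} degenerates like $\comp^{-1/2}$, and that the space--time dual $H^{-1}(Q_T)^d$ rescues the situation because $\vec\Phi\in H_0^1(Q_T)^d$ vanishes at $t=0$ and $t=T$, giving $\db{\vec u_{\comp,t},\vec\Phi}=-\int_{Q_T}(\vec u_\comp,\vec\Phi_t)\,dx\,dt$ and hence a bound by $\|\vec u_\comp\|_{L^2(Q_T)^d}$, is precisely the mechanism that makes the paper's one-line claim true. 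Note only that your compactness step there is unnecessary: each term of $\nabla p_\comp=-\rinc\vec u_{\comp,t}+\mu\Delta\vec u_\comp+\eta\nabla\div\vec u_\comp+\comp p_\comp\vec f+\rinc\vec f$, tested against a fixed $\vec\Phi\in H_0^1(Q_T)^d$, is either a bounded linear functional of $\vec u_\comp$ or tends to zero with $\comp p_\comp$, so \eqref{eq:th:weaklim:velconv} already yields the convergence of $\db{\nabla p_\comp,\vec\Phi}$ directly, with no subsequence extraction.
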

\begin{proof}
Consider the equality \eqref{eq:def:weak1:2} with arbitrary $\vec v \in V(\Omega)$:
\begin{multline}\label{eq:th:weaklim:1}
-\int_0^T \rinc (\vec u_\comp, \vec v) \, \varphi_t \, dt - \rinc (\vec u_0, \vec v) \varphi(0)
 = -\mu \int_0^T ((\vec u_\comp, \vec v)) \, \varphi \, dt + {}\\
{} + \int_0^T (\comp p_\comp \vec f, \vec v) \, \varphi \, dt +
\int_0^T \db{\rinc \vec f, \vec v} \varphi \, dt,
\end{multline}
where $\varphi\in C_0^\infty([0;T))$. Clearly $(\vec u_0, \vec v) =
(P_J\vec u_0, \vec v)$.
The estimate \eqref{eq:est1} implies that
$(\vec u_\comp)$ is bounded in $L^2(0,T;H_0^1(D)^d)$ and
$(\sqrt{\comp} p_\comp)$ is bounded in $L^\infty(0,T;L^2(D))$. Then, by
Alaoglu--Bourbaki theorem, for any sequence $\comp_n \to 0$, $n\to \infty$,
there exist $\boldsymbol w \in L^2(0,T;H_0^1(D)^d)$ and
a subsequence $(n_k)_{k\in\mathbb N}$ such that
$$
\vec u_{\comp_{n_k}} \wto \boldsymbol w \text{\; in } L^2(0,T;H_0^1(D)^d)
$$
when $k\to \infty$. Passing to the limit in \eqref{eq:th:weaklim:1}, we obtain
\begin{multline}\label{eq:th:weaklim:2}
-\int_0^T \rinc (\boldsymbol w, \vec v) \, \varphi_t \, dt
- \rinc (P_J\vec u_0, \vec v) \varphi(0) = {} \\
{} = -\mu \int_0^T ((\boldsymbol w, \vec v)) \, \varphi \, dt + 
0 + \int_0^T \db{\rinc \vec f, \vec v} \varphi \, dt,
\end{multline}
which also holds (by continuity) for each $\varphi \in H_0^1(0,T)$.
Passing to the limit in \eqref{eq:def:weak1:1}
assuming that $r\in \widehat{L}^2(D)$ and $\varphi \in C_0^\infty(0,T)$
we get
$$
\int_0^T ( \div \boldsymbol w, r ) \, \varphi \, dt = 0.
$$
Since $\div \boldsymbol w \in L^2(0,T;\widehat{L}^2(D))$,
this yields that $\boldsymbol w \in L^2(0,T;V(D))$.

Let us collect all the terms of \eqref{eq:th:weaklim:2} on its
left-hand side and denote the resulting expression by $\Phi(\vec v, \varphi)$.
For any fixed $\vec v\in H_0^1(D)^d$ 
$$\Phi(\vec v, \cdot) \in H^{-1}(0,T).$$
Integrating \eqref{eq:th:weaklim:2} by parts
we see that $\Phi(\vec v, \varphi) = \int_0^T \phi(\vec v, t) \varphi_t \,dt$,
where
$$
\phi(\vec v, t) = -\rinc (\boldsymbol w, \vec v) - 
\mu \int_0^t((\boldsymbol w, \vec v))\,  dt
+ \int_0^t \db{\rinc \vec f, \vec v} dt
$$
Clearly $\phi(\cdot, t) \in C(0,T;H^{-1}(D)^d)$.
From \eqref{eq:th:weaklim:2} and Lemma \ref{lem:nabla-inv}
it follows that there exists a
unique $Q\in C(0,T;\widehat{L}^2(D))$
such that for $t\in [0,T]$
$$
\phi(\cdot, t) = \db{\nabla Q(t),\cdot}
$$

Let $p$ denote the distributional derivative $\partial_t Q$ in 
$\mathcal D'(Q_T)$. Then $\{\boldsymbol w, p\}$ is a
weak solution to the problem \eqref{eq:inc}, \eqref{eq:oinc}
with the initial condition \eqref{eq:th:weaklim:initial}.
By Proposition \ref{th:exist:inc} this solution
is unique, so $\{\boldsymbol w, p\}=\{\uinc, \pinc\}$.
Hence the whole family $\vec u_\comp$
converges to $\uinc$ when $\comp\to 0$,
i.e. \eqref{eq:th:weaklim:velconv} holds.
(Otherwise this family would have a subsequence
which wouldn't converge to $\uinc$ weakly. But
such a subsequence would be bounded and applying
Alaoglu--Bourbaki theorem
and the arguments above we would come to a conclusion
that $\vec u_\comp$ converges to $\uinc$, which is a contradiction.)

As a consequence of estimate \eqref{eq:est1} and Alaoglu--Bourbaki theorem,
we can pass, if necessary, to another subsequence of $(n_k)$, so that
$\vec u_{\comp_{n_k}} \swto \vec w$ in $L^\infty(0,T;L^2(D)^d)$
for some $\vec w$.
Hence
$\forall \varphi \in C_0^\infty (0,T)$ 
and $\forall \vec r\in L^2(D)^d$
$$
\int_0^T (\vec u_{\comp_{n_k}}, \vec r) \varphi dt \to 
\int_0^T (\vec w, \vec r) \varphi dt, \quad k\to \infty.
$$
From this equation and \eqref{eq:th:weaklim:velconv}
we conclude that $\vec w = \uinc$ for a.e. $t\in[0,T]$.
Thus $\vec u_{\comp_{n_k}} \swto \uinc \text{\; in }
 L^\infty(0,T; \vecsp L^2(D)^d)$.
Since the whole family $\vec u_\comp$ converges to $\uinc$
when $\comp\to 0$, reasoning as above we conclude
that 
\eqref{eq:th:weaklim:velconv2} holds.

Finally, \eqref{eq:th:weaklim:presconv} follows from
\eqref{eq:th:weaklim:velconv} and
the passage to the limit in \eqref{eq:def:weak1:2}
when $\comp\to 0$.
\end{proof}
Theorem \ref{th:weaklim} is in good agreement with the results proved in
\cite{LionsMasmoudi,FeireislNovotnyNS}. But
the topology of the convergence \eqref{eq:th:weaklim:velconv},
\eqref{eq:th:weaklim:presconv}
can be strengthened using
the approach coming from the artificial compressibility method \cite{Temam}:

\begin{thm}\label{th:stronglim}
Suppose that $\uinco = P_J \vec u_0$. When $\comp \to 0$,
\begin{gather}
\vec u_\comp \to \uinc \text{\; in }
L^2(0,T; \vecsp H_0^1(D)^d), \label{eq:th:stronglim:velconv} \\
\vec u_\comp \to \uinc \text{\; in }
L^\infty(0,T; \vecsp L^2(D)^d), \label{eq:th:stronglim:velconv2} \\
\nabla p_\comp \to \nabla\pinc \text{\; in }
H^{-1}(Q_T)^d, \label{eq:th:stronglim:presconv}
\end{gather}
if and only if $\vec u_0\in J(D)$.
\end{thm}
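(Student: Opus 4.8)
\emph{Strategy and necessity.} The plan is to compare the compressible energy balance \eqref{eq:energy} (with $\sigma=0$, $\vec s=\rinc\vec f$) with the incompressible one \eqref{eq:energy-inc}, the obstruction to strong convergence being the extra initial kinetic energy $\tfrac12\rinc|P_G\vec u_0|^2$ stored in the potential part of $\vec u_0$. Necessity is immediate: after the redefinition of Proposition~\ref{prp:embeddingSLC} one has $\vec u_\comp\in C(0,T;L^2(D)^d)$ with $\vec u_\comp(0)=\vec u_0$, while $\uinc(0)=\uinco=P_J\vec u_0$, so that $\|\vec u_\comp-\uinc\|_{L^\infty(0,T;L^2(D)^d)}\ge|\vec u_0-P_J\vec u_0|=|P_G\vec u_0|$ for every $\comp$; hence \eqref{eq:th:stronglim:velconv2} can hold only if $P_G\vec u_0=0$, i.e. $\vec u_0\in J(D)$. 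The work is therefore in the sufficiency direction.

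\emph{Compactness of the solenoidal part.} Assume $\vec u_0\in J(D)$. I would first test \eqref{eq:def:weak1:2} only against $\vec v\in V(D)$: this annihilates the pressure and the $\eta\nabla\div$ term and leaves $\rinc\db{\vec u_{\comp,t},\vec v}=-\mu((\vec u_\comp,\vec v))+\comp(p_\comp\vec f,\vec v)+\rinc(\vec f,\vec v)$, whose right-hand side is bounded in $L^2(0,T;V(D)^*)$ \emph{uniformly} in $\comp$ by \eqref{eq:est1}—even though \eqref{eq:est2} controls $\partial_t\vec u_\comp$ only up to a factor $\comp^{-1/2}$. Thus $\partial_t P_J\vec u_\comp$ is bounded in $L^2(0,T;V(D)^*)$ and $P_J\vec u_\comp$ in $L^2(0,T;V(D))$; since $V(D)\hookrightarrow\hookrightarrow J(D)\hookrightarrow V(D)^*$, the Aubin--Lions lemma yields $P_J\vec u_\comp\to\uinc$ in $L^2(0,T;J(D))$ and in $C(0,T;V(D)^*)$. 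The latter gives $P_J\vec u_\comp(t)\wto\uinc(t)$ in $L^2(D)^d$ for \emph{every} $t$, so by weak lower semicontinuity and $|\vec u_\comp|^2=|P_J\vec u_\comp|^2+|P_G\vec u_\comp|^2$ I obtain the crucial pointwise bound $\liminf_{\comp\to0}|\vec u_\comp(t)|\ge|\uinc(t)|$.

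\emph{Energy comparison.} On $(0,t)$, \eqref{eq:energy} reads $E_\comp(t)+\mu\int_0^t\|\vec u_\comp\|^2+\eta\int_0^t|\div\vec u_\comp|^2=E_\comp(0)+\comp\int_0^t(p_\comp\vec f,\vec u_\comp)+\rinc\int_0^t(\vec f,\vec u_\comp)$, with $E_\comp(t)=\tfrac12\rinc|\vec u_\comp(t)|^2+\tfrac{\comp}{2\rinc}|p_\comp(t)|^2$. Since $\vec u_0\in J(D)$ gives $E_\comp(0)\to\tfrac12\rinc|\vec u_0|^2=\tfrac12\rinc|\uinco|^2=:E_0(0)$, the coupling term is $O(\sqrt\comp)$ (as $\sqrt\comp\,p_\comp$ is bounded), and \eqref{eq:th:weaklim:velconv} handles the force term, the right-hand side tends to $g(t):=E_0(0)+\rinc\int_0^t(\vec f,\uinc)$, which by \eqref{eq:energy-inc} equals $E_0(t)+\mu\int_0^t\|\uinc\|^2$ with $E_0(t):=\tfrac12\rinc|\uinc(t)|^2$. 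The three nonnegative left-hand terms have $\liminf$ at least $E_0(t)$, $\mu\int_0^t\|\uinc\|^2$ and $0$, summing precisely to $g(t)$; hence each converges to its lower bound. Taking $t=T$, the resulting $\int_0^T\|\vec u_\comp\|^2\to\int_0^T\|\uinc\|^2$ together with the weak convergence \eqref{eq:th:weaklim:velconv} in the Hilbert space $L^2(0,T;H_0^1(D)^d)$ gives \eqref{eq:th:stronglim:velconv}; this forces $\div\vec u_\comp\to0$ in $L^2(Q_T)$, so expressing $\nabla(p_\comp-\pinc)$ from the two momentum equations and letting $\comp\to0$ in $H^{-1}(Q_T)^d$ yields \eqref{eq:th:stronglim:presconv}.

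\emph{Uniform-in-time convergence: the main obstacle.} The delicate statement is \eqref{eq:th:stronglim:velconv2}. Applying the nonnegative-terms argument to the pointwise identity $\tfrac12\rinc|P_J\vec u_\comp(t)|^2+\tfrac12\rinc|P_G\vec u_\comp(t)|^2+\tfrac{\comp}{2\rinc}|p_\comp(t)|^2\to E_0(t)$ gives $|P_G\vec u_\comp(t)|\to0$ and $|P_J\vec u_\comp(t)|\to|\uinc(t)|$, hence $\vec u_\comp(t)\to\uinc(t)$ in $L^2(D)^d$ for each $t$; the hard part is upgrading this to \emph{uniform} convergence on $[0,T]$, precisely because only $P_J\vec u_\comp$—not $\vec u_\comp$ itself—has a $\comp$-uniform time derivative. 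I would argue in two steps. First, the monotone maps $t\mapsto\int_0^t\|\vec u_\comp\|^2$ and $t\mapsto\eta\int_0^t|\div\vec u_\comp|^2$ converge pointwise to continuous limits, hence uniformly by a Dini/Pólya-type lemma, and the force integrals converge uniformly by equicontinuity; this gives $E_\comp\to E_0$ in $C([0,T])$, so $\sup_t\big(|\vec u_\comp(t)|^2-|\uinc(t)|^2\big)\to0$. Second, writing $|\vec u_\comp(t)-\uinc(t)|^2\le 2\big(\uinc(t)-P_J\vec u_\comp(t),\uinc(t)\big)+o(1)$ uniformly, it remains to show $(P_J\vec u_\comp(t),\uinc(t))\to|\uinc(t)|^2$ uniformly; this holds because $t\mapsto(P_J\vec u_\comp(t),\uinc(t))$ is equicontinuous—its derivative is uniformly integrable thanks to the uniform $L^2(0,T;V(D)^*)$ bound on $\partial_t P_J\vec u_\comp$ together with $\uinc\in\widetilde{W}^{1,2}(0,T;V(D))$ from Proposition~\ref{th:exist:inc}—and converges pointwise to the continuous function $|\uinc(\cdot)|^2$. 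Closing this coupled uniform estimate is where I expect the proof to be most technical.
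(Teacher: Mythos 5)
Your overall architecture --- an energy comparison with a liminf-splitting of nonnegative terms, and necessity read off from the trace at $t=0$ --- is sound and genuinely different from the paper's proof; your necessity argument is exactly right (and is in fact the cleanest way to justify the paper's own ``if and only if''). But your compactness step contains a concrete error: the Leray projector does not preserve the Dirichlet boundary condition, so $P_J\vec u_\comp(t)$ does \emph{not} lie in $V(D)$ for $\vec u_\comp(t)\in H_0^1(D)^d$ (even for a smooth domain one only gets a field in $H^1(D)^d\cap J(D)$ with generally nonzero tangential trace, and for the merely partially smooth boundaries assumed here not even that much). Hence the claimed bound on $P_J\vec u_\comp$ in $L^2(0,T;V(D))$ is false, and the Aubin--Lions lemma for the triple $V(D)\hookrightarrow\hookrightarrow J(D)\hookrightarrow V(D)^*$ does not apply as stated; only the $L^2(0,T;V(D)^*)$ bound on $\partial_t P_J\vec u_\comp$, which you derived correctly, survives. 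The same defect resurfaces in your final step: the product rule for $t\mapsto(P_J\vec u_\comp(t),\uinc(t))$ requires the pairing $\db{\uinc_t,P_J\vec u_\comp}$, which is undefined because $\uinc_t\in L^2(0,T;V(D)^*)$ pairs only with elements of $V(D)$; the stronger regularity $\uinc_t\in L^2(0,T;J(D))$ of Proposition \ref{th:exist:inc} is available only under hypotheses ($\uinco\in V(D)$, $\vec f\in L^2(0,T;L^2(D))$) that the theorem does not assume.

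Both uses are repairable without any compactness. The pointwise weak convergence $(\overline{\vec u}_\comp(t),\vec v)\to(\overline{\uinc}(t),\vec v)$ for all $\vec v\in J(D)$ --- which is all your liminf step actually needs --- follows, as in the paper, by integrating the weak formulation \eqref{eq:def:weak1:2} in time over $(0,t)$ for $\vec v\in V(D)$ and invoking Banach--Steinhaus together with the uniform $L^\infty(0,T;L^2(D)^d)$ bound from \eqref{eq:est1}; the same integrated identity gives $\comp$-uniform equicontinuity of $t\mapsto(\overline{\vec u}_\comp(t),\vec v)$, so your uniform-in-time step can be closed by combining this with the compactness of the trajectory $\setof{\overline{\uinc}(t)}{t\in[0,T]}$ in $J(D)$ (a finite $\varepsilon$-net argument) instead of the differentiation argument. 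For comparison, the paper bypasses all of this machinery: it forms the single quantity $X_\comp$ (squared $L^2$ distance at the endpoint, plus the dissipation of the difference, plus the pressure energy), computes its exact limit $\rinc\left(|\vec u_0|^2-|P_J\vec u_0|^2\right)$ from the two energy equalities \eqref{eq:energy} and \eqref{eq:energy-inc} together with the weak convergences of Theorem \ref{th:weaklim}, and reads both implications off at once --- shorter, quantitative (it exhibits the energy defect $|P_G\vec u_0|^2$ as the precise obstruction), and free of Aubin--Lions. Your liminf-splitting recovers the sufficiency half with standard tools, at the cost of the extra uniformity work that you correctly flagged as the technical crux.
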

\begin{proof} 
By Remark \ref{rem:embeddingSLC}
and Theorem \ref{th:weaklim} $\forall \vec v \in V(D)$
\begin{multline*}
\rinc (\overline{\vec u}_\comp(t) - \vec u_0, \vec v) + 0 = {}\\
{}=
-\mu \int_0^t ((\vec u_\comp, \vec v)) dt
- \eta \int_0^t (\div \vec u_\comp)(\div \vec v)dt
+ \int_0^t (\rinc + \comp p) (\vec f, \vec v) dt
\to {}\\
{} \to
-\mu \int_0^t ((\uinc, \vec v)) dt
- 0
+ \int_0^t \rinc (\vec f, \vec v) dt =
\rinc (\overline{\uinc}(t) - \uinco, \vec v).
\end{multline*}
By estimate \eqref{eq:est1} $|\overline{\vec u}_\comp(t)|$ is
bounded for $0<\comp<1$.
Then, since $V(D)$ is dense in $J(D)$,
by Banach--Steinhaus theorem $\forall \vec v \in J(D)$
\begin{equation}\label{eq:th:stronglim:aux}
(\overline{\vec u}_\comp(t), \vec v) \to (\overline{\uinc}(t), \vec v).
\end{equation}

Now consider a family of real numbers
\begin{multline*}
X_\comp = \rinc |\overline{\vec u}_\comp(T) - \overline{\uinc}(T)|^2 +
  \frac{\comp}{\rinc} |\overline{p}_\comp(T)|^2 + {}\\
{} +  2 \mu \int_0^T \|\vec u_\comp - \uinc\|^2 \, dt +
  2 \eta \int_0^T |\div (\vec u_\comp - \uinc)|^2 \, dt.
\end{multline*}
Expanding the parentheses we write
$$
X_\comp = X^{(1)} + X^{(2)}_\comp + X^{(3)}_\comp,
$$
where
\begin{gather*}
X^{(1)} = \rinc |\overline{\uinc} (T)|^2 + 2 \mu \int_0^T \|\uinc \|^2 \, dt,\\
X^{(2)}_\comp = -2\rinc (\overline{\vec u}_\comp(T), \overline{\uinc} (T))
 - 4 \mu \int_0^T ((\vec u_\comp, \uinc )) \, dt, \\
X^{(3)}_\comp = \rinc |\overline{\vec u}_\comp(T)|^2 +
  \frac{\comp}{\rinc} |\overline{p}_\comp(T)|^2
   + 2 \mu \int_0^T \|\vec u_\comp \|^2 \,dt+
   2 \eta \int_0^T |\div \vec u_\comp|^2 \, dt.
\end{gather*}
By energy equality \eqref{eq:energy-inc}
$$
X^{(1)} = \rinc|\uinc_0|^2 + \int_0^T (\rinc \vec f, \uinc)\, dt.
$$
From \eqref{eq:th:stronglim:aux} and
\eqref{eq:th:weaklim:velconv}
$$
X^{(2)}_\comp \to -2\rinc (\overline{\uinc}(T), \overline{\uinc}(T))
 - 4 \mu \int_0^T ((\uinc, \uinc )) \, dt = -2 X^{(1)}.
$$
Now we pass to the limit in energy equality \eqref{eq:energy} using
estimate \eqref{eq:est1}:
\begin{multline*}
X^{(3)}_\comp = \rinc|\vec u_0|^2 + \frac{\comp}{\rinc} |p_0|^2 +
 \int_0^T ((\rinc + \comp p) \vec f, \vec u_\comp)\, dt \to \\
 \to \rinc|\vec u_0|^2 + 0 +
  \int_0^T (\rinc \vec f, \uinc)\, dt = X^{(1)}
  + \rinc(|\vec u_0|^2-|\uinco|^2).
\end{multline*}
Hence we have proved that $X_\comp \to \rinc(|\vec u_0|^2-|\uinco|^2)$
when $\comp \to 0$.
Then \eqref{eq:th:stronglim:velconv} and \eqref{eq:th:stronglim:velconv2}
hold if and only if
$|\vec u_0| = |\uinco| = |P_J \vec u_0|$, i.e. $\vec u_0\in J(D)$.
And the passage to the limit in \eqref{eq:def:weak1:2} implies
\eqref{eq:th:stronglim:presconv}.
\end{proof}

\subsection{Convergence of the Pressure}

The important difference between the equations of compressible
fluid and the equations of incompressible fluid is that only the
former enjoy the mass conservation property.
Namely, let 
$$M(t) = \int_D \rho \, dx = \int_D (\rinc+\comp p) \, dx.$$
Then \eqref{eq:comp} implies
$$
\frac{dM}{dt} = 0.
$$
Consequently $\int_D p(t) \, dx$ doesn't depend on $t$.
In contrast, this is not true for the incompressible fluid,
since $\pinc$ can be shifted by arbitrary function of $t$.

However, for arbitrary $A\in \mathbb R$
the pressure $\pinc$ can be shifted so that
$\int_D \pinc(t) \, dx = A$ for a.e. $t\in[0,T]$.
(Let $C(t) = (\int \pinc(t) \, dx - A) / \int_D \,dx$, then the
substitution
$\pinc(t) := \pinc(t) - C(t)$ yields the desired result.)

\begin{thm}\label{th:weakplim}
Suppose that $\uinco=\vec u_0 \in J(D)$ and
$$\uinc \in \widetilde{W}^{1,2}(0,T;H_0^1(D)^d), \qquad
\pinc \in W^{1,2}(0,T;L^2(D)),$$
\begin{equation}\label{eq:massconsv}
\int_D q(t) \, dx = \int_D p_0 \, dx 
\quad\text{for a.e. } t\in [0,T]
\end{equation}
Then 
\begin{equation*}
p_\comp \swto \pinc \text{\; in } L^\infty(0,T;L^2(D)), \quad \comp\to 0.
\end{equation*}
\end{thm}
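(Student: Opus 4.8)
The plan is to obtain the $*$-weak convergence from two ingredients: a uniform bound $\|p_\comp\|_{L^\infty(0,T;L^2(D))}\le C$, and the gradient convergence $\nabla p_\comp\swto\nabla\pinc$ in $H^{-1}(Q_T)^d$ already supplied by Theorem~\ref{th:weaklim}. Note that the hypothesis $\uinco=\vec u_0\in J(D)$ means $\uinco=P_J\vec u_0$, so Theorems~\ref{th:weaklim} and~\ref{th:stronglim} both apply to the limit $\{\uinc,\pinc\}$; in particular $\vec u_\comp\to\uinc$ \emph{strongly} in $L^2(0,T;H_0^1(D)^d)$. Once boundedness is in hand, $*$-weak convergence will follow by identifying every subsequential limit with $\pinc$.

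The main step is the uniform bound. I would form the system for the differences $\vec z=\vec u_\comp-\uinc$ and $w=p_\comp-\pinc$. Subtracting the incompressible equations \eqref{eq:inc} from \eqref{eq:comp} and using $\div\uinc=0$ gives, for a.e.\ $t$,
\begin{gather*}
\comp w_t + \rinc\div\vec z = -\comp\pinc_t,\\
\rinc\vec z_t + \nabla w = \mu\Delta\vec z + \eta\nabla\div\vec u_\comp + \comp p_\comp\vec f.
\end{gather*}
Since $\vec z\in\widetilde W^{1,2}(0,T;H_0^1(D)^d)$ and $w\in W^{1,2}(0,T;L^2(D))$ by Theorem~\ref{th:exist:comp} and the hypotheses, I may pair the momentum equation with $\vec z$ and the continuity equation with $w/\rinc$, using Remarks~\ref{rem:embeddingSC} and~\ref{rem:embeddingSLC:1}. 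The coupling terms $\pm(w,\div\vec z)$ cancel, the identity $\div\vec z=\div\vec u_\comp$ renders the $\eta$-term dissipative, and one arrives at
\begin{equation*}
\frac12\frac{d}{dt}\left(\rinc|\vec z|^2 + \frac{\comp}{\rinc}|w|^2\right) + \mu\|\vec z\|^2 + \eta|\div\vec z|^2 = \comp(p_\comp\vec f,\vec z) - \frac{\comp}{\rinc}(\pinc_t,w).
\end{equation*}
Here the hypotheses pay off: $\comp|(p_\comp\vec f,\vec z)|\le\sqrt\comp\,\|\vec f\|_{L^\infty(Q_T)^d}\big(\sqrt\comp\,|p_\comp|\big)|\vec z|$ is controlled by \eqref{eq:est1} and Young's inequality, while $\frac{\comp}{\rinc}|(\pinc_t,w)|\le\frac12\big(\rinc|\vec z|^2+\frac{\comp}{\rinc}|w|^2\big)+\frac{\comp}{2\rinc}|\pinc_t|^2$ with $\pinc_t\in L^2(0,T;L^2(D))$. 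Since $\vec z(0)=\vec u_0-\uinco=0$ and $\frac{\comp}{\rinc}|w(0)|^2=\frac{\comp}{\rinc}|p_0-\pinc(0)|^2=O(\comp)$, Proposition~\ref{prp:Gronwall} yields $\rinc|\vec z(t)|^2+\frac{\comp}{\rinc}|w(t)|^2\le C\comp$ uniformly in $t$ and $\comp$. Thus $|w(t)|^2\le C$, and as $\pinc\in W^{1,2}(0,T;L^2(D))\subset L^\infty(0,T;L^2(D))$ we obtain $\|p_\comp\|_{L^\infty(0,T;L^2(D))}\le C$ uniformly in $\comp$.

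For the identification, the family is bounded in $L^\infty(0,T;L^2(D))=\big(L^1(0,T;L^2(D))\big)^*$, so Alaoglu--Bourbaki extracts from any $\comp_n\to0$ a subsequence with $p_{\comp_n}\swto\bar p$. Testing against $\div\vec\Phi$, $\vec\Phi\in\mathcal D(Q_T)^d$, shows $\nabla p_{\comp_n}\to\nabla\bar p$ in $\mathcal D'(Q_T)^d$, whereas Theorem~\ref{th:weaklim} gives $\nabla p_{\comp_n}\to\nabla\pinc$; hence $\nabla(\bar p-\pinc)=0$ and, $D$ being connected, $\bar p-\pinc$ depends only on $t$. Mass conservation $\int_D p_\comp(t)\,dx=\int_D p_0\,dx$ passes to the limit and, combined with \eqref{eq:massconsv}, forces $\int_D(\bar p-\pinc)(t)\,dx=0$, so $\bar p=\pinc$. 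Every subsequential $*$-weak limit thus equals $\pinc$, and with the uniform bound this gives $p_\comp\swto\pinc$ in $L^\infty(0,T;L^2(D))$.

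I expect the uniform bound to be the main obstacle. The a priori estimate \eqref{eq:est1} controls only $\sqrt\comp\,p_\comp$, and $\|\vec u_{\comp,t}\|_{L^2(0,T;H^{-1}(D)^d)}$ in \eqref{eq:est2} blows up like $\comp^{-1/2}$, so estimating $\nabla p_\comp$ directly from the momentum equation fails. The difference estimate circumvents this by cancelling the singular pressure--velocity coupling, leaving only the regular source $\comp\pinc_t$ whose $O(\comp)$ size (guaranteed by $\pinc\in W^{1,2}(0,T;L^2(D))$) together with the well-prepared datum $\vec z(0)=0$ produces an $O(\comp)$ energy, exactly the borderline strength needed to keep $|w(t)|$ bounded as $\comp\to0$.
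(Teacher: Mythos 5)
Your proof is correct and takes essentially the same route as the paper: there, too, one forms the difference $\{\vec u_\comp-\uinc,\,p_\comp-\pinc\}$, observes that it is a weak solution of the non-homogeneous problem \eqref{eq:gcomp}, \eqref{eq:ogcomp} with the well-prepared data $\vec U_\comp|_{t=0}=0$, $\sigma=-\comp\pinc_t$, $\vec s=\comp\pinc\vec f$, obtains the uniform $L^\infty(0,T;L^2(D))$ bound on the pressure difference from the a priori estimate \eqref{eq:est1} (whose derivation is exactly your energy computation), and then extracts a $*$-weak limit and identifies it with $\pinc$. The only cosmetic differences are that you re-derive the energy inequality by hand instead of citing \eqref{eq:est1} for the difference system, and that you identify the subsequential limit by passing mass conservation to the limit, whereas the paper notes that $P_\comp$ takes values in $\widehat{L}^2(D)$ (so the limit $P_*$ is mean-zero) and concludes $P_*=0$ from $\nabla P_*=0$ via Proposition \ref{prp:grad-est} --- two equivalent ways of using hypothesis \eqref{eq:massconsv}.
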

\begin{proof}
Assumptions of the theorem imply that the solution $\{\uinc, \pinc\}$
to the ``incompressible'' problem \eqref{eq:inc}, \eqref{eq:oinc}
belongs to the same functional space as the solution to the
``compressible'' problem \eqref{eq:comp}, \eqref{eq:ocomp}.
Then the difference of the solutions
$\{\vec U_\comp, P_\comp\}
\equiv\{\vec u_\comp - \uinc, p_\comp - \pinc\}$
is a weak solution to the problem \eqref{eq:gcomp}, \eqref{eq:ogcomp}
with initial conditions
$$
\vec U_\comp|_{t=0} = 0, \qquad P_\comp|_{t=0} = P_0\equiv p_0 - \pinc(0)
$$
and with non-homogeneous terms given by
$$
\sigma = - \comp\pinc_t, \quad \vec s = \comp \pinc \vec f.
$$
Assumption \eqref{eq:massconsv} implies 
$\pinc_t \in L^2(0,T;\widehat{L}^2(D))$.
Then \eqref{eq:est1} implies
\begin{gather*}
\|\vec U_\comp\|_{L^2(0,T;H_0^1(D)^d)}+
\|\vec U_\comp\|_{L^\infty(0,T;L^2(D)^d)} 
 \le C \sqrt{\comp} E, \\
 \|P_\comp\|_{L^\infty(0,T;\widehat{L}^2(D))} \le C E, \\
 E= \|P_0\|_{\widehat{L}^2(D)} + 
  \|\pinc_t\|_{L^2(0,T;L^2(D))} + 
  \sqrt{\comp}\|\vec f\|_\infty \|q\|_{L^2(0,T;L^2(D))}
\end{gather*}
thus we have proved the convergence of the velocity (once again).

Uniform boundedness of the pressure $P_\comp$, by Alaoglu--Bourbaki theorem,
implies that for every sequence $\comp_n\to 0$, $n\to \infty$,
there exists a subsequence $(n_k)_{k\in \mathbb N}$ such that
$$
P_{\comp_{n_k}} \swto P_* \quad \text{in } L^\infty(0,T;L^2(D)),
\quad k\to \infty,
$$
where $P_*\in L^\infty(0,T;\widehat{L}^2(D))$.
Passing to the limit in the equation \eqref{eq:def:weak1:2}
(as in the proof of Theorem \ref{th:weaklim})
we show that $\nabla P_*$ is zero. Then $P_*=0$
and the whole family $P_\comp$ converges to zero.
\end{proof}
\begin{rem}
The estimate \eqref{eq:est1} not only yields the \emph{fact}
of convergence, but also gives the \emph{rate} of the convergence.
In other words, \eqref{eq:est1} allows to estimate the
\emph{error of approximation} of a compressible fluid
by the incompressible one.
\end{rem}

It would be desirable to have \emph{strong} convergence of the pressure.
But in general case such convergence is impossible, since
(due to Proposition \ref{prp:embeddingSC})
$$\|p_\comp - \pinc\|_{L^\infty(0,T;L^2(D))}
 \ge \|p_0 - \overline{\pinc}(0)\|_{L^2(D)}.$$

\begin{thm}
Suppose the assumptions of Theorem~\ref{th:weakplim} are satisfied
and the domain $D$ is star-shaped with respect to some ball.
If, in addition,
$$
\pinc \in W^{2,2}(0,T;L^2(D))
\qquad
\text{and}
\qquad
p_0 = \overline{\pinc}(0),
$$
then
$$
\begin{gathered}
p_{\comp} \to \pinc
\quad\text{in}\quad
L^\infty(0,T;L^2(D)),
\qquad \comp \to 0.
\end{gathered}
$$
\end{thm}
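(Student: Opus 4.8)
The plan is to prove $p_\comp \to \pinc$ strongly in $L^\infty(0,T;L^2(D))$ in three movements: (i) derive a uniform-in-$\comp$ bound on the time derivative $P_{\comp,t}$ of the pressure error; (ii) upgrade the weak-$*$ convergence of Theorem~\ref{th:weakplim} to strong $L^2(0,T;L^2(D))$ convergence by testing the momentum balance against a field produced by the Bogovskii operator; and (iii) combine the two to obtain uniform-in-time convergence. Throughout I work with the difference $\{\vec U_\comp, P_\comp\} = \{\vec u_\comp - \uinc,\ p_\comp - \pinc\}$, which by the proof of Theorem~\ref{th:weakplim} is the weak solution of \eqref{eq:gcomp}, \eqref{eq:ogcomp} with $\vec U_\comp|_{t=0}=0$, $\sigma=-\comp\pinc_t$, $\vec s=\comp\pinc\vec f$, and now — because of the compatibility hypothesis $p_0=\overline{\pinc}(0)$ — with \emph{vanishing} initial pressure $P_\comp|_{t=0}=p_0-\overline{\pinc}(0)=0$. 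Estimate \eqref{eq:est1} already gives $\|\vec U_\comp\|_{L^2(0,T;H_0^1(D)^d)}+\|\vec U_\comp\|_{L^\infty(0,T;L^2(D)^d)}=O(\sqrt\comp)$ and $\|P_\comp\|_{L^\infty(0,T;\widehat L^2(D))}=O(1)$.

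The crucial step is the estimate on $P_{\comp,t}$. Differentiating \eqref{eq:gcomp} in $t$, the pair $\{\vec U_{\comp,t},P_{\comp,t}\}$ formally solves the same system \eqref{eq:gcomp} with continuity source $\sigma'=-\comp\pinc_{tt}$ and a forcing obtained by differentiating $\comp p_\comp\vec f$ in time. The hypothesis $\pinc\in W^{2,2}(0,T;L^2(D))$ is exactly what makes $\sigma'\in L^2(0,T;L^2(D))$ admissible; it contributes $\tfrac1{\sqrt\comp}\|\sigma'\|_{L^2(0,T;L^2(D))}=\sqrt\comp\,\|\pinc_{tt}\|_{L^2(0,T;L^2(D))}=O(\sqrt\comp)$ to the data-functional $E$ of Theorem~\ref{th:exist:comp}. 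The two initial values come from the equations at $t=0$: since $\vec u_0\in J(D)$ we have $\div\vec u_0=0$, hence $P_{\comp,t}(0)=-\pinc_t(0)$ is bounded, and since $P_\comp(0)=0$ the momentum balance gives $\rinc\vec U_{\comp,t}(0)=\comp\pinc(0)\vec f(0)=O(\comp)$. Thus the data-functional for the differentiated problem is $O(\sqrt\comp)$, and \eqref{eq:est1} yields the uniform bound $\|P_{\comp,t}\|_{L^\infty(0,T;L^2(D))}=O(1)$; in particular $\|\div\vec u_\comp\|_{L^2(0,T;L^2(D))}=\tfrac\comp\rinc\|p_{\comp,t}\|_{L^2(0,T;L^2(D))}=O(\comp)$ — a gain of a full power of $\comp$ over the naive bound $O(\sqrt\comp)$. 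I expect this to be the main obstacle: applying \eqref{eq:est1} to the differentiated system requires controlling $\partial_t(\comp p_\comp\vec f)$ in $L^2(0,T;H^{-1}(D)^d)$, and the term $\comp p_\comp\vec f_t$ demands regularity of $\vec f$ in time beyond the standing assumption $\vec f\in L^\infty(Q_T)^d$; one must either invoke such regularity (e.g. $\vec f$ independent of $t$, which already suffices) or integrate that contribution by parts in time against $\vec U_{\comp,t}$.

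With the bound on $\div\vec u_\comp$ in hand, I upgrade the convergence of $P_\comp$ to strong $L^2(0,T;L^2(D))$. Since $D$ is star-shaped with respect to a ball, Proposition~\ref{prp:bogovskii} furnishes the Bogovskii operator $\mathcal B$; for a.e.\ $t$ put $\vec w=\mathcal B(P_\comp)$, so that $\div\vec w=P_\comp$ and $\|\vec w\|_{H_0^1(D)^d}\le C|P_\comp|$. Testing the momentum balance for $\{\vec U_\comp,P_\comp\}$ with $\vec w$ and integrating over $[0,T]$ turns the pressure term into $\int_0^T|P_\comp|^2\,dt$, whence
$$\int_0^T |P_\comp|^2\,dt=\rinc\int_0^T(\vec U_{\comp,t},\vec w)\,dt+\mu\int_0^T((\vec U_\comp,\vec w))\,dt+\eta\int_0^T(\div\vec U_\comp,P_\comp)\,dt-\comp\int_0^T(p_\comp\vec f,\vec w)\,dt.$$
By Cauchy--Bunyakovsky and Young every term except the first is bounded by a small multiple of $\int_0^T|P_\comp|^2\,dt$ (absorbed on the left) plus a remainder that is $O(\comp)$, thanks to the $O(\sqrt\comp)$ bounds on $\vec U_\comp$ and the $O(\comp)$ bound on $\div\vec U_\comp$. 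In the first term I integrate by parts in time (using $\vec U_\comp(0)=0$ and $\partial_t\vec w=\mathcal B(P_{\comp,t})$) and then substitute $P_{\comp,t}=-\tfrac\rinc\comp\div\vec u_\comp-\pinc_t$; here the dangerous factor $\comp^{-1}$ is exactly compensated by $\|\div\vec u_\comp\|_{L^2(0,T;L^2(D))}=O(\comp)$ from the previous step, leaving an $O(\sqrt\comp)$ contribution. Collecting, $\int_0^T|P_\comp|^2\,dt\to0$, i.e.\ $P_\comp\to0$ in $L^2(0,T;L^2(D))$.

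Finally I pass from $L^2(0,T;L^2(D))$ to $L^\infty(0,T;L^2(D))$. The uniform bound $\|P_{\comp,t}\|_{L^2(0,T;L^2(D))}\le C$ makes the family $t\mapsto P_\comp(t)$ equicontinuous into $L^2(D)$, since $|P_\comp(t)-P_\comp(s)|\le C|t-s|^{1/2}$ with $C$ independent of $\comp$. If $\sup_t|P_\comp(t)|$ failed to tend to $0$, there would be $\comp_k\to0$ and instants $t_k$ with $|P_{\comp_k}(t_k)|\ge\varepsilon$; equicontinuity would then force $|P_{\comp_k}|\ge\varepsilon/2$ on an interval of fixed positive length, contradicting $\|P_{\comp_k}\|_{L^2(0,T;L^2(D))}\to0$. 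Hence $\|P_\comp\|_{L^\infty(0,T;L^2(D))}\to0$, which is precisely $p_\comp\to\pinc$ in $L^\infty(0,T;L^2(D))$. Note the compatibility $p_0=\overline{\pinc}(0)$ enters decisively in making the initial data of the differentiated problem small (it is also necessary, by the remark preceding the theorem), and the extra regularity $\pinc\in W^{2,2}$ enters in making its source admissible.
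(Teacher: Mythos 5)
Your strategy stands or falls with step (i): the uniform bound on $P_{\comp,t}$ obtained by applying estimate \eqref{eq:est1} to the time-differentiated system, which is what yields $\|\div \vec u_\comp\|_{L^2(0,T;L^2(D))}=O(\comp)$. Steps (ii) and (iii) genuinely need it — with only the basic bound $\|\div\vec u_\comp\|_{L^2(0,T;L^2(D))}=O(\sqrt\comp)$ from \eqref{eq:est1}, your integrated-by-parts term $\int_0^T(\vec U_\comp,\mathcal B(P_{\comp,t}))\,dt$ is merely $O(1)$, and $\|P_\comp\|_{L^2(0,T;L^2(D))}$ is not forced to vanish. But step (i) fails under the hypotheses of the theorem: differentiating the forcing produces the term $\comp\, p_\comp \vec f_t$, while the standing assumption is only $\vec f\in L^\infty(Q_T)^d$ — no time regularity whatsoever, so $\vec f_t$ is not even a function. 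You flag this honestly, but neither escape you offer closes the gap: assuming $\vec f$ time-independent changes the theorem, and integrating that contribution by parts against $\vec U_{\comp,t}$ generates either $\vec U_{\comp,tt}$ or pointwise-in-time values of $\vec U_{\comp,t}$, neither of which is controlled. There are also secondary rigor issues in the same step (fixable only with the extra regularity): differentiating a weak solution in $t$ and assigning the initial values $P_{\comp,t}(0)=-\pinc_t(0)$ and $\rinc\vec U_{\comp,t}(0)=\comp\,\overline{\pinc}(0)\vec f(0)$ must be done at the Galerkin level, and $\vec f(0)$ again has no meaning for $\vec f\in L^\infty(Q_T)^d$.

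The missing idea, and the paper's route, is to apply the Bogovskii operator not to $P_\comp$ but to $\pinc_t$, as a \emph{corrector in the velocity} rather than a test field, avoiding time differentiation entirely. Set $\vec w=-\mathcal B(\pinc_t)/\rinc$, which lies in $W^{1,2}(0,T;H_0^1(D)^d)$ precisely because $\pinc\in W^{2,2}(0,T;L^2(D))$, and consider $\{\vec u_\comp-\uinc-\comp\vec w,\ p_\comp-\pinc\}$. Since $\rinc\div\vec w=-\pinc_t$, the continuity source vanishes ($\sigma=0$), while the momentum equation acquires $\vec s=\comp(-\rinc\vec w_t+\mu\Delta\vec w+\eta\nabla\div\vec w)+\comp\,\pinc\vec f=O(\comp)$ in $L^2(0,T;H^{-1}(D)^d)$; together with $P_0=p_0-\overline{\pinc}(0)=0$ and $\vec U_0=-\comp\,\overline{\vec w}(0)=O(\comp)$, a \emph{single} application of \eqref{eq:est1} gives
$$
\|p_\comp-\pinc\|_{L^\infty(0,T;L^2(D))}\le C\sqrt\comp,
$$
an explicit rate, with no assumption on $\vec f$ beyond $L^\infty(Q_T)^d$. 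If you are willing to add time regularity of $\vec f$ as a hypothesis, your three-movement argument does appear to close and is an instructive alternative (the $O(\comp)$ divergence bound and the $\mathcal B(P_\comp)$ test are of independent interest), but as written it does not prove the stated theorem.
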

\begin{proof}
Let $\vec w = -\mathcal{B} (\pinc_t)/\rinc$. By Proposition \ref{prp:bogovskii}
$$\vec w \in W^{1,2}(0,T;H_0^1(D)^d)$$
and
$$
\|\vec w\|_{W^{1,2}(0,T;H_0^1(D)^d)} \le C \|\pinc_t\|_{W^{1,2}(0,T;L^2(D))}.
$$
Then
$\{\vec U_\comp, P_\comp\}\equiv\{\vec u_\comp - \uinc -
 \comp \vec w, p_\comp - \pinc\}$
is a weak solution to the problem \eqref{eq:gcomp}, \eqref{eq:ogcomp}
with initial conditions
$$
\vec U_\comp|_{t=0} = \vec U_0 \equiv -\comp \vec \overline{w}(0),
 \qquad
 P_\comp|_{t=0} = P_0\equiv p_0 - \overline{\pinc}(0)=0
$$
and with non-homogeneous terms given by
$$
\sigma = 0, \quad \vec s = \comp(-\rinc\vec w_t + \mu \Delta \vec w +
\eta \nabla \div \vec w) + \comp \pinc \vec f.
$$

Clearly
$$
\|\vec s\|_{L^2(0,T;H^{-1}(D)^d)} \le
 \comp
 C C_1,
$$
where
$$
C_1=\|\vec w\|_{W^{1,2}(0,T;H_0^1(D)^d)} +
 \|\vec f\|_\infty \|\pinc\|_{L^2(0,T;L^2(D))}.
$$
Assumption \eqref{eq:massconsv} implies 
$\pinc_t \in L^2(0,T;\widehat{L}^2(D))$.
Then, from \eqref{eq:est1}
\begin{gather*}
\|\vec U_\comp\|_{L^2(0,T;H_0^1(D)^d)}
 \le C \sqrt{\comp} E, \\
 \|P_\comp\|_{L^\infty(0,T;\widehat{L}^2(D))} \le C E,
\end{gather*}
where
\begin{multline*}
E \equiv (\|\vec U_0\|_{L^2(D)^d} + \sqrt{\comp}\|P_0\|_{L^2(D)} + {}\\
{}+\frac{1}{\sqrt{\comp}}\|\sigma\|_{L^2(0,T;L^2(D))} +
 \|\vec s\|_{L^2(0,T;H^{-1}(D)^d)})/\sqrt{\comp} \le C C_1 \sqrt{\comp},
\end{multline*}
which yields the desired convergence when $\comp\to 0$.
\end{proof}

\subsection*{Acknowledgment}

The author is very grateful to V.Zh. Sakbaev, I.V. Volovich and E.G. Shifrin
for fruitful discussions of the paper and would also like to 
express gratitude to A. Shirikyan for warm hospitality during the
summer school on hydrodynamics at Cergy--Pontoise.



\end{document}